\documentclass[11pt]{article}
\usepackage{amsmath,amssymb,amsfonts,amsthm,graphicx}
\usepackage{hyperref}
\newtheorem{theorem}{Theorem}[section]
\newtheorem{proposition}[theorem]{Proposition}
\newtheorem{corollary}[theorem]{Corollary}
\newtheorem{lemma}[theorem]{Lemma}
\newtheorem{remark}[theorem]{Remark}
\newtheorem{definition}[theorem]{Definition}
\numberwithin{equation}{section}
\title{A dependence with complete connections approach to generalized R\'enyi continued fractions}
\author{
    Gabriela Ileana Sebe\footnote{e-mail: igsebe@yahoo.com.} \\
    \emph{\small Politehnica University of Bucharest, Faculty of Applied Sciences},\\
    \emph{\small Splaiul Independentei 313, 060042, Bucharest, Romania} and \\
    \emph{\small Institute of Mathematical Statistics and Applied Mathematics}, \\
     \emph{\small Calea 13 Sept. 13, 050711 Bucharest, Romania} \\
    and\\
    Dan Lascu\footnote{e-mail: lascudan@gmail.com.}\nonumber \\
    \emph{\small Mircea cel Batran Naval Academy, 1 Fulgerului, 900218 Constanta,
    Romania} \\
    }
\sloppy
\begin{document}
\maketitle
\thispagestyle{empty}
\begin{abstract}
We introduce and study in detail a special class of backward continued fractions that represents a generalization of R\'enyi continued fractions.
We investigate the main metrical properties of the digits occurring in these expansions and we construct the natural extension for the transformation that generates the R\'enyi-type expansion.
Also we define the associated random system with complete connections whose ergodic behavior allows us to prove a variant of Gauss-Kuzmin-type theorem.
\end{abstract}
{\bf Mathematics Subject Classifications (2010): 11J70, 60A10}  \\
{\bf Key words}: R\'enyi continued fractions, Perron-Frobenius operator, Random system with complete connections, Gauss-Kuzmin problem

\section{Introduction}

In \cite {Grochenig&Haas1-1996}, Gr\"ochenig and Haas investigated $u$-backward continued fractions associated with the one parameter family of interval maps of the form $T_u (x) :=  \frac{1}{u(1-x)} - \lfloor \frac{1}{u(1-x)} \rfloor$,
where $u>0$, $x \in [0, 1)$ and $\lfloor \cdot \rfloor$ denotes the floor function.
As the parameter varies the maps $T_u$ exhibit a curious dynamical behavior.
It was shown that varying $u$ in the interval $(0,4)$ there is a viable theory of $u$-backward continued fractions, which fails when $u \geq 4$.
So, for a given $u \in (0, 4)$, each irrational $x \in [0, 1)$ can be uniquely represented as an infinite continued fraction
\begin{equation}
x = 1 - \displaystyle \frac{1}{u n_1 - \displaystyle \frac{1}{n_2 - \displaystyle \frac{1}{u n_3 - \ddots - \displaystyle \frac{1}{s_k n_k - \ddots} }}} :=[a_1, a_2, a_3, \ldots]_u, \label{1.1}
\end{equation}
where the integers $n_i=1+a_i \geq 2$ and the ``coefficient'' $s_j$ of $n_j$ is alternating between $s_j = 1$ for even $j$ and $s_j = u$ for odd $j$.
The case $u = 1$ was studied by R\'enyi \cite{Renyi-1957} and provides an alternate approach to continued fractions and rational approximation.
The graph of $T_1$ can be obtained from the graph of the regular (Gauss) continued fraction transformation $G(x):=\frac{1}{x} - \lfloor \frac{1}{x} \rfloor$ by reflecting about the line $x=\frac{1}{2}$.
%
%
%
%\begin{figure}[h!]
%  \centering
%  \includegraphics[width=1\textwidth]{Graphs_G&T1.jpg}
%  \caption{Graphs of Gauss (red) and R\'enyi (blue) transformations }\label{fig1}
%\end{figure}
%
%
%

The main purpose of Gr\"ochenig and Haas was to find an explicit form of an absolutely continuous invariant measure for $T_u$ similar to the Gauss
measure $\frac{dx}{x+1}$ for $G$ and R\'enyi's measure $\frac{dx}{x}$ for $T_1$.
While the Gauss measure is finite, the R\'enyi's measure is infinite.
They showed that the invariant measure for $T_u$ is finite if and only if $0 < u < 4$ and $u_q \neq 4\cos^2 \frac{\pi}{q}$, $q=3, 4, \ldots$.
Also, they have identified the invariant probability measure for $T_u$ corresponding to the values $u=1/N$ for positive integers $N \geq 2$.
This is a finite measure.
In this particular case we will call the continued fraction in (\ref{1.1}) \textit{R\'enyi-type continued fraction},
we will note $T_{\frac{1}{N}}$ with $R_N$ and we will call it \textit{R\'enyi-type (continued fraction) transformation}.

Several related families of transformations have been investigated with attention to their dynamical properties as well as to the associated continued fraction theories \cite{Burton&Kraaikamp-2000, Haas-2002, Haas&Molnar-2004}.
This has lead to a wide range of connections between continued fractions, diophantine approximation, ergodic theory and hyperbolic geometry \cite{Adler&Flatto-1991, Bedford-1991, Series-1985}.

Our goal in this paper is to start an approach to the metrical theory of R\'enyi-type continued fraction expansions via dependence with complete connections.
Using the natural extensions for the R\'enyi-type transformations, we give an infinite-order-chain representation of the sequence of the incomplete quotients of these expansions.
Then we show that the associated random systems with complete connections are with contraction and their transition operators are regular w.r.t. the Banach space of Lipschitz functions. This leads further, in Section 6, to a solution of Gauss-Kuzmin problem for these continued fractions.
\section{R\'enyi-type continued fraction expansions as dynamical system} \label{section2}
For a fixed integer $N \geq 2$, we define R\'enyi-type continued fraction transformation $R_N: [0,1] \rightarrow [0,1]$ by
\begin{equation}
R_{N}(x) :=
\left\{
\begin{array}{ll}
{\displaystyle \frac{N}{1-x}- \left\lfloor\frac{N}{1-x}\right\rfloor},&
{ x \in [0, 1) }\\
0,& x=1.
\end{array}
\right. \label{2.1}
\end{equation}
To this transformation, we associate the digits $a_n(x)$, $n \in {\mathbb{N}}_+:=\{1, 2,3, \ldots \}$, which are defined by
\begin{equation}
a_n := a_n(x) = a_1\left( R^{n-1}_N (x) \right), \quad n \geq 2, \label{2.2}
\end{equation}
with $R_{N}^0 (x) = x$ and
\begin{equation}
a_1:=a_1(x) = \left\{\begin{array}{lll}
\left\lfloor \frac{N}{1-x} \right\rfloor & \hbox{if} & x \neq 1, \\
\infty & \hbox{if} & x = 1.
\end{array} \right. \label{2.2'}
\end{equation}
Putting $\Lambda:=\{N, N+1, \ldots\}$, observe that $a_n \in \Lambda$ for any  $n \in \mathbb{N}_+$.
Using these digits we can write
\begin{equation}
R_{N}(x) := \frac{N}{1-x} - a_1(x). \label{2.3}
\end{equation}
Then
\begin{equation} \label{2.4}
x = 1 - \displaystyle \frac{N}{1+a_1 - \displaystyle \frac{N}{1+a_2 - \displaystyle \frac{N}{1+a_3 - \ddots}}} :=[a_1, a_2, a_3, \ldots]_R.
\end{equation}
The R\'enyi-type continued fraction in (\ref{2.4}) is treated as a dynamical system $\left([0,1],{\mathcal B}_{[0,1]}, R_N, \rho_N \right)$,
where $\mathcal{B}_{[0,1]}$ denotes the $\sigma$-algebra of all Borel subsets of $[0,1]$,
and
\begin{equation}
\rho_N (A) :=
\frac{1}{\log \left(\frac{N}{N-1}\right)} \int_{A} \frac{\mathrm{d}x}{x+N-1}, \quad A \in {\mathcal{B}}_{[0,1]} \label{2.5}
\end{equation}
is the invariant probability measure under $R_N$ \cite {Grochenig&Haas1-1996}.

Let $\frac{p_n}{q_n} := \frac{p_n(x)}{q_n(x)}$ denote the partial fractions obtained by applying the transformation
$R_N$ $n$-times, so
\begin{equation} \label{2.6}
\frac{p_n}{q_n} = 1 - \displaystyle \frac{N}{1+a_1 - \displaystyle \frac{N}{1+a_2 - \ddots - \displaystyle \frac{N}{1+a_n}}}.
\end{equation}
For the functions $p_n$ and $q_n$ we obtain the following recurrence relations:
\begin{eqnarray}
p_0&=&1, \ p_1=1+a_1-N, \quad p_n = (1+a_n) p_{n-1} - N p_{n-2}, n \geq 2, \label{2.7} \\
q_0&=&1, \ q_1=1+a_1,  \ \ \ \ \quad \quad q_n = (1+a_n) q_{n-1} - N q_{n-2}, n \geq 2. \label{2.8}
\end{eqnarray}
Using these recurrences, induction easily gives that
\begin{equation}
p_{n-1}q_n - p_nq_{n-1} = N^n, \quad n \in \mathbb{N_+} \label{2.9}
\end{equation}
and
\begin{equation}
x = \frac{p_n + (R^n_N - 1)p_{n-1}}
{q_n + (R^n_N - 1)q_{n-1}}, \quad n \in \mathbb{N_+}. \label{2.10}
\end{equation}
We obtain
\begin{equation}
\left| x - \frac{p_n}{q_n} \right| \leq \frac{N^n}{q_n(q_n - q_{n-1})}, \quad n \in \mathbb{N}_+. \label{2.11}
\end{equation}

\section{The probabilistic structure of $(a_n)_{n \in \mathbb{N_+}}$ under the Lebesgue measure} \label{section3}

We start by defining the $n$-th order cylinder associated to the digits $(a_n)_{n \in \mathbb{N_+}}$ of the R\'enyi-type continued fraction (\ref{2.3}).
An $n$-block $(a_1, a_2, \ldots, a_n)$ is said to be \textit{admissible} for the expansion in (\ref{2.3}) if there exists $x \in [0, 1)$ such that $a_i(x)=a_i$ for all $1 \leq i \leq n$.
If $(a_1, a_2, \ldots, a_n)$ is an admissible sequence, we call the set
\begin{equation}
I (a_1, a_2, \ldots, a_n) = \{x \in [0, 1]:  a_1(x) = a_1, a_2(x) = a_2, \ldots, a_n(x) = a_n \}, \label{3.1}
\end{equation}
\textit{the $n$-th order cylinder}. As we mention above, $(a_1, a_2, \ldots, a_n) \in \Lambda^n$.
For example, for any $a_1=i \in \Lambda$ we have
\begin{equation}
I\left( a_1\right) = \left\{x \in [0, 1]: a_1(x) = a_1 \right\} = \left[ 1 - \frac{N}{i}, 1 - \frac{N}{i+1} \right). \label{3.2}
\end{equation}
By induction, it can be shown that
\begin{equation}
I (a_1, a_2, \ldots, a_n) = \left[ \frac{p_n-p_{n-1}}{q_n-q_{n-1}}, \frac{p_n}{q_n} \right), \label{3.3}
\end{equation}
for all $ n \geq 1$. Hence, using (\ref{2.9})
\begin{equation} \label{3.4}
\lambda \left(I (a_1, a_2, \ldots, a_n) \right) = \frac{N^n}{q_n (q_n - q_{n-1})},
\end{equation}
where $\lambda$ is the Lebesgue measure and $(q_n)$ is as in (\ref{2.8}).

To derive the so-called Brod\'en-Borel-L\'evy formula \cite{IG-2009, IK-2002},
let us define $(s_n)_{n \in {\mathbb{N}}}$ by
\begin{equation}\label{3.5}
  s_0:=1, \quad s_n := 1- N\frac{q_{n-1}}{q_n}.
\end{equation}
From (\ref{2.8}), $s_n=1-N/(a_n + s_{n-1})$.
Hence,
\begin{equation}\label{3.6}
 s_n = 1 - \displaystyle \frac{N}{1+a_n - \displaystyle \frac{N}{1+a_{n-1} - \ddots - \displaystyle \frac{N}{1+a_1 }}} =
  [a_n, a_{n-1}, \ldots, a_1]_R.
\end{equation}
\begin{proposition} [Brod\'en-Borel-L\'evy-type formula] \label{prop.BBL}
Let $\lambda$ denote the Lebesgue measure on $[0, 1]$.
For any $n \in \mathbb{N}_+$, the conditional probability
$\lambda (R^n_N < x |a_1, \ldots, a_n )$ is given as follows:
\begin{equation}
\lambda (R^n_N < x |a_1,\ldots, a_n )
= \frac{Nx}{N-(1-x)(1-s_n)}, \quad x \in [0, 1] \label{3.7}
\end{equation}
where $(s_n)$ is as in $(\ref{3.5})$ and $a_1, \ldots, a_n$ are as in $(\ref{2.2})$ and $(\ref{2.2'})$.
\end{proposition}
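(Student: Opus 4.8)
The plan is to compute the conditional distribution $\lambda(R_N^n < x \mid a_1,\dots,a_n)$ directly from the geometry of the cylinders. Fix an admissible block $(a_1,\dots,a_n)$ and write $I = I(a_1,\dots,a_n)$. By definition of conditional probability with respect to the $\sigma$-algebra generated by $a_1,\dots,a_n$, which is atomic with atoms the $n$-th order cylinders, we have
\[
\lambda(R_N^n < x \mid a_1,\dots,a_n) = \frac{\lambda\bigl(\{y \in I : R_N^n(y) < x\}\bigr)}{\lambda(I)}.
\]
So the first step is to describe the set $\{y \in I : R_N^n(y) < x\}$ explicitly. Using \eqref{2.10}, on the cylinder $I$ the map $y \mapsto R_N^n(y)$ is the Möbius transformation $t \mapsto \dfrac{p_n + (t-1)p_{n-1}}{q_n + (t-1)q_{n-1}}$ evaluated at $t = R_N^n(y)$; inverting, $y$ corresponds to $t = R_N^n(y)$ via $y = \dfrac{p_n - p_{n-1} + t\,p_{n-1}}{q_n - q_{n-1} + t\,q_{n-1}}$. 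Hence $\{y \in I : R_N^n(y) < x\}$ is the sub-interval of $I$ with endpoints obtained by plugging $t=0$ and $t=x$ into this fractional-linear formula (the endpoint $t=0$ giving the left end $\frac{p_n-p_{n-1}}{q_n-q_{n-1}}$ of $I$, consistent with \eqref{3.3}).

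The second step is to compute the length of that sub-interval. The length of the image of $[0,x]$ under the Möbius map $t \mapsto \dfrac{(p_n-p_{n-1}) + t p_{n-1}}{(q_n-q_{n-1}) + t q_{n-1}}$ is, by the standard determinant formula for the difference of two values of a Möbius map,
\[
\frac{\bigl|\det\bigr|\, x}{\bigl((q_n-q_{n-1}) + 0\cdot q_{n-1}\bigr)\bigl((q_n-q_{n-1}) + x q_{n-1}\bigr)},
\]
where the determinant is $(p_n-p_{n-1})q_{n-1} - p_{n-1}(q_n - q_{n-1}) = p_n q_{n-1} - p_{n-1} q_n = -N^n$ by \eqref{2.9}. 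So the numerator is $N^n x$ and the denominator is $(q_n - q_{n-1})\bigl(q_n - q_{n-1} + x q_{n-1}\bigr)$. Dividing by $\lambda(I) = \dfrac{N^n}{q_n(q_n - q_{n-1})}$ from \eqref{3.4}, the factors $N^n$ and $(q_n - q_{n-1})$ cancel and we are left with
\[
\lambda(R_N^n < x \mid a_1,\dots,a_n) = \frac{q_n\, x}{q_n - q_{n-1} + x q_{n-1}}.
\]
The third and final step is purely algebraic: divide numerator and denominator by $q_n$ and use the definition \eqref{3.5}, $s_n = 1 - N q_{n-1}/q_n$, equivalently $q_{n-1}/q_n = (1-s_n)/N$, to rewrite the denominator as $1 - (1-s_n)/N + x(1-s_n)/N = \bigl(N - (1-x)(1-s_n)\bigr)/N$, which yields exactly $\dfrac{Nx}{N - (1-x)(1-s_n)}$.

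I do not expect a serious obstacle here; the argument is a routine "cylinder + Möbius Jacobian" computation. The one point that needs a little care is the orientation of the branch of $R_N^n$ on $I$: one must check that $t \mapsto \frac{(p_n-p_{n-1}) + t p_{n-1}}{(q_n-q_{n-1}) + t q_{n-1}}$ is increasing on $[0,1]$ (so that $\{R_N^n < x\}$ really is the initial sub-interval $[0,x]$ pulled back, and the absolute value in the length formula is handled consistently), and that the denominator $q_n - q_{n-1} + xq_{n-1}$ stays positive for $x \in [0,1]$ — both follow from the positivity and monotonicity of $(q_n)$ implicit in \eqref{2.8} together with $s_n \in [0,1)$. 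Verifying $s_n \in [0,1)$, hence that the stated right-hand side is a genuine distribution function on $[0,1]$, can be read off from \eqref{3.6} since $[a_n,\dots,a_1]_R \in [0,1)$ by the same reasoning that gives $x \in [0,1)$ in \eqref{2.4}.
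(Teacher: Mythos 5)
Your proposal is correct and follows essentially the same route as the paper: condition on the cylinder $I(a_1,\ldots,a_n)$, use \eqref{2.10} and \eqref{3.3} to identify $\{R_N^n<x\}\cap I$ as an interval whose length is $N^n x/\bigl((q_n-q_{n-1})(q_n+(x-1)q_{n-1})\bigr)$ via \eqref{2.9}, divide by \eqref{3.4}, and rewrite with $s_n$ from \eqref{3.5}. The only difference is cosmetic: you make the monotonicity/orientation check explicit, which the paper absorbs into the absolute value.
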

\begin{proof}
By definition, we have
\begin{equation} \nonumber
\lambda\left(R^n_N < x |a_1,\ldots, a_n \right) =
\frac{\lambda\left(\left(R^n_N < x\right) \cap I(a_1,\ldots, a_n) \right)}{\lambda\left(I(a_1,\ldots, a_n)\right)}
\end{equation}
for any $n \in \mathbb{N}_+$ and $x \in [0, 1]$.
From (\ref{2.10}) and (\ref{3.3}) we have
\begin{eqnarray} \nonumber
\lambda\left(\left(R^n_N < x \right) \cap I(a_1,\ldots, a_n)\right) &=&
\left|\frac{p_n+(x-1)p_{n-1}}{q_n+(x-1)q_{n-1}} - \frac{p_n-p_{n-1}}{q_n-q_{n-1}}\right|  \\
\nonumber \\
&=& \frac{N^nx}{(q_n-q_{n-1})(q_n+(x-1)q_{n-1})}. \nonumber
\end{eqnarray}
From this and (\ref{3.4}), we have
\begin{eqnarray}
\lambda\left(R^n_N < x |a_1, \ldots, a_n \right) &=&
\frac{\lambda\left(\left(R^n_N < x\right) \cap I(a_1,\ldots, a_n) \right)}{\lambda\left(I(a_1,\ldots, a_n)\right)} \nonumber \\
\nonumber \\
&=& \frac{xq_n}{q_n+(x-1)q_{n-1}} = \frac{Nx}{N - (1-x)(1-s_n)} \nonumber
\end{eqnarray}
for any $n \in \mathbb{N}_+$ and $x \in [0, 1]$.
\end{proof}
The Brod\'en-Borel-L\'evy-type formula allows us to determine the probabilistic structure of the digits $(a_n)_{n \in \mathbb{N}_+}$ under $\lambda$.
\begin{proposition} \label{prop.BBLConsecinta}
For any $i \geq N$ and $n \in \mathbb{N}_+$, we have
\begin{equation}
\lambda(a_1=i) = \frac{N}{i(i+1)}, \quad
\lambda\left(a_{n+1}=i |a_1,\ldots, a_n \right) = P_{N,i}(s_n) \label{3.8}
\end{equation}
where $(s_n)$ is as in (\ref{3.5}), and
\begin{equation}
P_{N,i}(x) := \frac{x+N-1}{(x+i)\,(x+i-1)}. \label{3.9}
\end{equation}
\end{proposition}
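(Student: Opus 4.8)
The plan is to derive both identities as direct consequences of the Brod\'en-Borel-L\'evy-type formula in Proposition~\ref{prop.BBL}. For the first identity, $\lambda(a_1 = i)$ is just the Lebesgue measure of the first-order cylinder $I(i)$, which by \eqref{3.2} equals the length of the interval $\left[1 - \frac{N}{i}, 1 - \frac{N}{i+1}\right)$; computing $\left(1 - \frac{N}{i+1}\right) - \left(1 - \frac{N}{i}\right) = \frac{N}{i} - \frac{N}{i+1} = \frac{N}{i(i+1)}$ gives the claim immediately. (Equivalently, one may take $n = 0$ in the second formula with the convention $s_0 = 1$, since $P_{N,i}(1) = \frac{N}{(i+1)i}$, which is a useful consistency check.)

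For the conditional probability, the key observation is that the event $\{a_{n+1} = i\}$ can be rewritten in terms of $R_N^n$. Indeed, $a_{n+1}(x) = a_1(R_N^n(x))$ by \eqref{2.2}, so by \eqref{3.2} we have $a_{n+1}(x) = i$ if and only if $R_N^n(x) \in I(i) = \left[1 - \frac{N}{i}, 1 - \frac{N}{i+1}\right)$. Therefore
\begin{equation} \nonumber
\lambda\left(a_{n+1} = i \mid a_1, \ldots, a_n\right)
= \lambda\left(1 - \tfrac{N}{i} \le R_N^n < 1 - \tfrac{N}{i+1} \,\Big|\, a_1, \ldots, a_n\right),
\end{equation}
which by Proposition~\ref{prop.BBL} equals $F_n\!\left(1 - \frac{N}{i+1}\right) - F_n\!\left(1 - \frac{N}{i}\right)$, where $F_n(x) := \frac{Nx}{N - (1-x)(1-s_n)}$ is the conditional distribution function from \eqref{3.7}.

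It then remains to evaluate this difference and show it simplifies to $P_{N,i}(s_n)$. Substituting $x = 1 - \frac{N}{j}$ into $F_n$, the denominator becomes $N - \frac{N}{j}(1-s_n) = \frac{N(j - 1 + s_n)}{j}$ and the numerator $N\left(1 - \frac{N}{j}\right) = \frac{N(j-N)}{j}$, so $F_n\!\left(1 - \frac{N}{j}\right) = \frac{j - N}{j - 1 + s_n} = 1 - \frac{N - 1 + s_n}{j - 1 + s_n}$. Hence the difference over $j = i$ and $j = i+1$ telescopes to $(N - 1 + s_n)\left(\frac{1}{i - 1 + s_n} - \frac{1}{i + s_n}\right) = \frac{s_n + N - 1}{(s_n + i)(s_n + i - 1)} = P_{N,i}(s_n)$, as desired. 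I expect no genuine obstacle here; the only mild care needed is in the algebraic bookkeeping of the substitution and in noting that $R_N^n < 1 - \frac{N}{i+1}$ versus $\le$ does not affect the Lebesgue measure, so the half-open nature of the cylinders in \eqref{3.2} is harmless.
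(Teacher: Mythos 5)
Your proof is correct and follows essentially the same route as the paper: the first identity is read off from the length of the cylinder $I(i)$ in \eqref{3.2}, and the second comes from identifying $\{a_{n+1}=i\}$ with $\{R_N^n \in I(i)\}$ and evaluating the Brod\'en-Borel-L\'evy conditional distribution \eqref{3.7} at the endpoints, exactly as in the paper's argument. Your explicit telescoping computation of the difference is just a more detailed writing-out of the algebra the paper leaves implicit.
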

\begin{proof}
From (\ref{3.2}), the case $\lambda(a_1=i)$ holds.
For $n \in \mathbb{N}_+$ and $x \in [0, 1]$, we have
$R_N^n(x) = [a_{n+1}, a_{n+2}, \ldots]_R$,
where $(a_n)$ is as in (\ref{2.2}).
By using (\ref{3.7}), we have
\begin{eqnarray}
\lambda(\,a_{n+1}=i \,|\,a_1,\ldots, a_n\, )
& = & \lambda\left( \,R^n_N \in \left[1-\frac{N}{i}, 1-\frac{N}{i+1} \right)\,
| \,a_1,\ldots, a_n \,\right) \nonumber \\
&=& \frac{N \left(1-\frac{N}{i+1}\right)}{N-\frac{N}{i+1}(1-s_n)} - \frac{N \left(1-\frac{N}{i}\right)}{N-\frac{N}{i}(1-s_n)}\nonumber \\
\nonumber\\
&=& P_{N,i}(s_n). \nonumber
\end{eqnarray}
\end{proof}

It is easy to check that
\begin{equation}
\sum_{i=N}^{\infty}P_{N,i}(x) = 1 \quad \mbox{ for any } x \in [0, 1]. \label{3.10}
\end{equation}
\begin{remark}
Proposition \ref{prop.BBLConsecinta} is the starting point of an approach to the metrical
theory of R\'enyi-type continued fraction expansions via dependence with complete connections
(see \cite{IG-2009}, Section 5.2).
We apply this method in Section 6 to obtain a solution of Gauss-Kuzmin-type problem for R\'enyi-type continued fraction expansions.
\end{remark}
\begin{corollary}
The sequence $(s_n)_{n \in \mathbb{N}_+}$ with $s_0 = 1$ is a homogeneous $[0, 1]$-valued Markov chain on
$\left([0, 1], \mathcal{B}_{[0, 1]}, \lambda \right)$ with the following transition mechanism:
from state $s \in [0, 1]$ the only possible one-step transitions are those to states $1-N/(s+i)$, $i \geq N$, with corresponding  probabilities $P_{N,i}(s)$, $i \geq N$.
\end{corollary}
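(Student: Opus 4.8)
The plan is to read the Markov property off directly from Proposition~\ref{prop.BBLConsecinta} together with the recursion $s_{n} = 1 - N/(a_{n}+s_{n-1})$ obtained from \eqref{2.8}. Let $\mathcal{F}_n := \sigma(a_1,\ldots,a_n)$ be the filtration generated by the digits. First I would check that $\sigma(s_1,\ldots,s_n) = \mathcal{F}_n$ for every $n$: each $s_k$ is a function of $a_1,\ldots,a_k$ by \eqref{3.6}, so $\sigma(s_1,\ldots,s_n)\subseteq\mathcal{F}_n$; conversely, since $s_0 = 1$ and $s_k = 1 - N/(a_k+s_{k-1})$ with $i \mapsto 1 - N/(i+s_{k-1})$ strictly increasing on $\Lambda$, the digit $a_k$ is recovered from the pair $(s_{k-1},s_k)$, so inductively $a_1,\ldots,a_n$ are $\sigma(s_1,\ldots,s_n)$-measurable. (In passing one also notes $a_k+s_{k-1}\ge N$, whence $s_k\in[0,1)$ for $k\ge 1$, so the chain indeed takes values in $[0,1]$.)

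Next I would compute the one-step conditional law. Fix $n\in\mathbb{N}_+$ and a Borel set $B\subseteq[0,1]$. Since $s_{n+1} = 1 - N/(a_{n+1}+s_n)$ and $s_n$ is $\mathcal{F}_n$-measurable, conditioning on $\mathcal{F}_n$ and using $\lambda(a_{n+1}=i\mid\mathcal{F}_n) = P_{N,i}(s_n)$ from \eqref{3.8} gives
\begin{equation*}
\lambda(s_{n+1}\in B \mid \mathcal{F}_n) = \sum_{i\ge N}\mathbf{1}_B\!\left(1-\frac{N}{s_n+i}\right)P_{N,i}(s_n) =: Q(s_n,B),
\end{equation*}
where summing term by term is legitimate because the masses $P_{N,i}(s_n)$ are nonnegative with sum $1$ by \eqref{3.10}. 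The point is that $Q$ does not depend on $n$, that $Q(\cdot,B)$ is a Borel function of the current state alone, and that $s\mapsto Q(s,\cdot)$ is a genuine transition probability kernel on $[0,1]$: for each $s$, $Q(s,\cdot)$ is the discrete probability measure putting mass $P_{N,i}(s)$ at the point $1-N/(s+i)$, $i\ge N$.

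Finally, since $\sigma(s_n)\subseteq\mathcal{F}_n$ and $Q(s_n,B)$ is $\sigma(s_n)$-measurable, the tower property gives $\lambda(s_{n+1}\in B\mid s_n) = \lambda(s_{n+1}\in B\mid\mathcal{F}_n) = Q(s_n,B)$ $\lambda$-a.s.; equality of these two conditional probabilities is precisely the Markov property, and the $n$-independence of $Q$ is homogeneity. Reading off $Q$ then yields the stated transition mechanism. I do not expect a genuine obstacle here: the analytic content sits already in Propositions~\ref{prop.BBL} and \ref{prop.BBLConsecinta}, and what remains is the measure-theoretic bookkeeping --- chiefly the identification $\sigma(s_1,\ldots,s_n)=\mathcal{F}_n$ and the verification that $Q$ is a bona fide kernel --- which is routine.
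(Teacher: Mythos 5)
Your argument is correct and follows exactly the route the paper intends: the paper states this as an unproved corollary of Proposition \ref{prop.BBLConsecinta}, since the identity $\lambda(a_{n+1}=i\mid a_1,\ldots,a_n)=P_{N,i}(s_n)$ together with the recursion $s_{n+1}=1-N/(a_{n+1}+s_n)$ immediately yields the homogeneous transition mechanism. Your additional bookkeeping (the identification $\sigma(s_1,\ldots,s_n)=\sigma(a_1,\ldots,a_n)$ via $a_k=N/(1-s_k)-s_{k-1}$, and the verification that $Q$ is a kernel) is exactly the routine detail the paper leaves implicit.
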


\section{Natural extension and extended random variables}

Fix an integer $N \geq 2$.
In this section, we introduce the natural extension $\overline{R}_N$ of $R_N$ in (\ref{2.1}) and its extended random variables \cite{IK-2002}.
\subsection{Natural extension}

Let $\left([0, 1],{\mathcal B}_{[0, 1]}, R_N \right)$ be as in Section 2.
Define $(u_{N,i})_{i \geq N}$ by
\begin{equation}
u_{N,i}: [0, 1] \rightarrow [0, 1]; \quad
u_{N,i}(x) := 1 - \frac{N}{x+i}, \quad x \in [0, 1]. \label{4.1}
\end{equation}
\noindent
For each $i \geq N$, $u_{N,i}$ is a right inverse of $R_N$, that is,
\begin{equation} \label{4.2}
\left(R_N \circ u_{N,i}\right)(x) = x, \quad \mbox{for any } x \in [0, 1].
\end{equation}
Furthermore, if $a_1(x)=i$, then $\left(u_{N,i} \circ R_N \right)(x)=x$ where $a_1$ is as in (\ref{2.2'}).
\begin{definition} \label{def.natext}
The natural extension $\left([0, 1]^2, {\mathcal B}_{[0, 1]^2},\overline{R}_N \right)$ of $\left([0, 1],{\mathcal B}_{[0, 1]}, R_N \right)$ is the transformation $\overline{R}_N$ of the square space
$\left([0, 1]^2,{\mathcal B}_{[0, 1]}^2 \right):=\left([0, 1], {\mathcal B}_{[0, 1]}\right) \times \left([0, 1], {\mathcal B}_{[0, 1]}\right)$
defined as follows \cite{Nakada-1981}:
\begin{eqnarray} \label{4.3}
&&\overline{R}_N: [0, 1]^2 \rightarrow [0, 1]^2; \nonumber \\
&&\overline{R}_N(x,y) := \left( R_N(x), \,u_{N,a_1(x)}(y) \right), \quad (x, y) \in [0, 1]^2.
\end{eqnarray}
\end{definition}

From (\ref{4.2}), we see that $\overline{R}_N$ is bijective on $[0, 1]^2$ with the inverse
\begin{equation} \label{4.4}
(\overline{R}_N)^{-1}(x, y)
= (u_{N, a_1(y)}(x), \,
R_N(y)), \quad (x, y) \in [0, 1]^2.
\end{equation}
Iterations of (\ref{4.3}) and (\ref{4.4}) are given as follows for each $n \geq 2$:
\begin{eqnarray}
\left(\overline{R}_N\right)^n(x, y) =
\left(\,R^n_N(x), \,[a_n(x), a_{n-1}(x), \ldots, a_2(x),\, a_1(x)+ y - 1 ]_R \,\right), \label{4.5} \\
\nonumber
\\
\left(\overline{R}_N\right)^{-n}(x, y) =
\left(\,[a_n(y), a_{n-1}(y), \ldots, a_2(y), \,a_1(y)+x-1]_R,\, R^{n}_N(y) \,\right).
\label{4.6}
\end{eqnarray}
For $\rho_N$ in (\ref{2.5}), we define its \textit{extended measure} $\overline{\rho}_N$ on $\left([0, 1]^2, {\mathcal{B}}^2_{[0, 1]}\right)$ as
\begin{equation} \label{4.7}
\overline{\rho}_N(B) :=\frac{1}{ \log \left(\frac{N}{N-1}\right) } \int\!\!\!\int_{B}
\frac{N\mathrm{d}x\mathrm{d}y}{\left\{ N-(1-x)(1-y) \right\}^2}, \quad B \in {\mathcal{B}}^2_{[0, 1]}.
\end{equation}
Then
$\overline{\rho}_N(A \times [0, 1]) = \overline{\rho}_N([0, 1] \times A) = \rho_N(A)$ for any $A \in {\mathcal{B}}_{[0, 1]}$.
The measure $\overline{\rho}_N$ is preserved by $\overline{R}_N$, i.e.,
$\overline{\rho}_N ((\overline{R}_N)^{-1}(B))
= \overline{\rho}_N (B)$ for any $B \in {\mathcal{B}}^2_{[0, 1]}$.
\subsection{Extended random variables}

Define the projection $E:[0, 1]^2 \rightarrow [0, 1]$ by $E(x,y):=x$.
With respect to $\overline{R}_N$ in (\ref{4.3}),
define \textit{extended incomplete quotients} $\overline{a}_l(x,y)$,
$l \in \mathbb{Z}:=\{\ldots, -2, -1, 0, 1, 2, \ldots\}$ at $(x, y) \in [0, 1]^2$ by
\begin{equation} \label{4.8}
\overline{a}_{l}(x, y) := (a_1 \circ E)\left(\,(\overline{R}_N)^{l-1} (x, y) \,\right),
\quad l \in \mathbb{Z}.
\end{equation}
\begin{remark} \label{rem.4.2}

\begin{enumerate}
%(i)
\item[(i)]
Remark that $\overline{a}_{l}(x, y)$ in (\ref{4.6}) is also well-defined for $l \leq 0$ because $\overline{R}_N$ is invertible.
By (\ref{4.3}) and (\ref{4.4}) we have
\begin{equation} \label{4.9}
\overline{a}_n(x, y) = a_n(x), \quad
\overline{a}_0(x, y) = a_1(y), \quad
\overline{a}_{-n}(x, y) = a_{n+1}(y),
\end{equation}
for any $n \in \mathbb{N}_+$ and $(x, y) \in [0, 1]^2$.
%
%(ii)
\item[(ii)]
Since the measure $\overline{\rho}_N$ is preserved by $\overline{R}_N$, the doubly infinite sequence $(\overline{a}_l(x,y))_{l \in \mathbb{Z}}$
is strictly stationary (i.e., its distribution is invariant under a shift of the indices) under $\overline{\rho}_N$.
\end{enumerate}
\end{remark}
\begin{theorem} \label{th.4.3}
Fix $(x,y) \in [0, 1]^2$ and let $\overline{a}_{l}:=\overline{a}_l(x,y)$ for $l \in {\mathbb Z}$.
Define $a:= [\overline{a}_0, \overline{a}_{-1}, \ldots]_R$.
Then the following holds for any $x \in [0, 1]$:
\begin{equation} \label{4.10}
\overline{\rho}_N \left( [0, x] \times [0, 1] \,|
\,\overline{a}_0, \overline{a}_{-1}, \ldots \right)
= \frac{Nx}{N - (1-x)(1-a)} \quad \overline{\rho}_N \mbox{-}\mathrm{a.s.}
\end{equation}
\end{theorem}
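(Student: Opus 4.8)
The plan is to transfer the argument of Proposition~\ref{prop.BBL} to the natural extension, reading off the conditioning $\sigma$-algebra through Remark~\ref{rem.4.2}. First I would note that, by Remark~\ref{rem.4.2}(i), the reversed sequence of extended digits satisfies $\overline{a}_0=a_1(y)$ and $\overline{a}_{-n}=a_{n+1}(y)$ for $n\in\mathbb{N}_+$, so that $(\overline{a}_0,\overline{a}_{-1},\overline{a}_{-2},\ldots)=(a_1(y),a_2(y),a_3(y),\ldots)$; in particular, by (\ref{2.4}),
\[
a=[\overline{a}_0,\overline{a}_{-1},\ldots]_R=[a_1(y),a_2(y),\ldots]_R=y \qquad \overline{\rho}_N\text{-a.s.},
\]
the exceptional set of $y$ lacking a genuine infinite expansion being Lebesgue-null, hence $\overline{\rho}_N$-null since $\overline{\rho}_N$ is absolutely continuous with respect to Lebesgue measure on $[0,1]^2$ by (\ref{4.7}). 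Moreover $\sigma(\overline{a}_0,\ldots,\overline{a}_{-n})$ is generated by the vertical cylinders $[0,1]\times I(i_1,\ldots,i_{n+1})$, with $I(\cdot)$ as in (\ref{3.1}), and $\sigma(\overline{a}_0,\overline{a}_{-1},\ldots)=\bigvee_{n\ge 0}\sigma(\overline{a}_0,\ldots,\overline{a}_{-n})$.

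Next I would compute the finite-level conditioning explicitly. For an admissible block $(i_1,\ldots,i_{n+1})$ put $I:=I(i_1,\ldots,i_{n+1})$. Using the density in (\ref{4.7}) and Fubini, $\overline{\rho}_N([0,x]\times I)$ is a constant multiple of $\int_I\big(\int_0^x(N-(1-t)(1-y))^{-2}\,\mathrm{d}t\big)\,\mathrm{d}y$, and the inner integral is elementary (substitute $u=1-t$, then $v=N-u(1-y)$) and equals $x\big((y+N-1)(N-(1-x)(1-y))\big)^{-1}$. Taking $x=1$ recovers $\overline{\rho}_N([0,1]\times I)=\rho_N(I)$, consistent with the marginal identity stated after (\ref{4.7}) and with (\ref{2.5}). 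Dividing, on the cylinder $[0,1]\times I$ one obtains
\[
\overline{\rho}_N\big([0,x]\times[0,1]\mid\overline{a}_0,\ldots,\overline{a}_{-n}\big)=\frac{N\int_I x\,(y+N-1)^{-1}(N-(1-x)(1-y))^{-1}\,\mathrm{d}y}{\int_I(y+N-1)^{-1}\,\mathrm{d}y}.
\]

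Finally I would let $n\to\infty$. By the martingale convergence theorem (L\'evy's upward theorem) the left-hand side converges $\overline{\rho}_N$-a.s. to $\overline{\rho}_N([0,x]\times[0,1]\mid\overline{a}_0,\overline{a}_{-1},\ldots)$. On the other hand, $\lambda(I(a_1(y),\ldots,a_{n+1}(y)))\to 0$ by (\ref{3.4}) (equivalently by (\ref{2.11})), so these cylinders shrink to $\{y\}$; since $y\mapsto(y+N-1)^{-1}$ and $y\mapsto x(y+N-1)^{-1}(N-(1-x)(1-y))^{-1}$ are continuous and strictly positive on the compact interval $[0,1]$, the ratio of $I$-averages on the right converges to the ratio of the integrand values at $y$, namely $Nx\big(N-(1-x)(1-y)\big)^{-1}$. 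Combining this with $y=a$ $\overline{\rho}_N$-a.s. yields (\ref{4.10}). I expect the only genuinely delicate point to be this last limit: one must know that for $\overline{\rho}_N$-a.e.\ $(x,y)$ the ratio of integrals over the shrinking cylinders $I(a_1(y),\ldots,a_{n+1}(y))$ tends to the pointwise ratio at $y$. This is a Lebesgue differentiation argument that goes through because the relevant densities are continuous and bounded away from $0$ and $\infty$ on $[0,1]$ while $\lambda(I(a_1,\ldots,a_n))\to 0$; equivalently, one may simply observe that, modulo a $\lambda$-null set, $\{y\}=\bigcap_n I(a_1(y),\ldots,a_n(y))$, so $\sigma(\overline{a}_0,\overline{a}_{-1},\ldots)$ agrees $\overline{\rho}_N$-mod $0$ with the $\sigma$-algebra generated by the second coordinate and (\ref{4.10}) is exactly the disintegration of $\overline{\rho}_N$ along that coordinate.
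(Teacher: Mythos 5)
Your proposal is correct and follows essentially the same route as the paper: reduce to conditioning on finitely many digits $\overline{a}_0,\ldots,\overline{a}_{-n}$ via martingale convergence, compute $\overline{\rho}_N([0,x]\times I_n)$ explicitly with the density (\ref{4.7}), and pass to the limit as the cylinders $I_n$ shrink. The only cosmetic difference is that the paper handles the last step with the mean value theorem for integrals (evaluating at some $y_n\in I_n$ with $y_n\to a$), while you argue via continuity of the integrands and the identification $a=y$ $\overline{\rho}_N$-a.s., which is an equivalent way of closing the same argument.
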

\begin{proof}
Recall the cylinder in (\ref{3.1}).
Let $I_{n}$ denote the cylinder $I(\overline{a}_0, \overline{a}_{-1}, \ldots, \overline{a}_{-n})$ for $n \in \mathbb{N}$.
We have
\begin{equation} \label{4.11}
\overline{\rho}_N \left( [0, x] \times [0, 1] \left. \right| \overline{a}_0, \overline{a}_{-1}, \ldots \right) =
\lim_{n \rightarrow \infty} \overline{\rho}_N \left( [0, x] \times [0, 1] \left. \right| \overline{a}_0, \ldots, \overline{a}_{-n} \right) \quad \overline{\rho}_N \mbox{-a.s.}
\end{equation}
and
\begin{eqnarray} \label{4.12}
&&\overline{\rho}_N ( [0, x] \times [0, 1] \left. \right| \overline{a}_0, \ldots, \overline{a}_{-n} )
=
\displaystyle{\frac{\overline{\rho}_N ([0, x] \times I_{n})}{\overline{\rho}_N ([0, 1) \times I_{n})}} \nonumber \\
\nonumber \\
 &&= \frac{1}{\log \left(\frac{N}{N-1}\right)}\displaystyle{\frac{1}{\rho_N(I_{n})}\int_{I_{n}}\mathrm{d}y\displaystyle\int^x_0{\frac{N\mathrm{d}u}{\left\{N-(1-u)(1-y)\right\}^2}}} \nonumber \\
 \nonumber \\
 &&= \displaystyle{\frac{1}{\rho_N(I_{n})} \int_{I_{n}} \frac{Nx}{\left\{N-(1-x)(1-y)\right\}^2}\, \rho_N(\mathrm{d}y)} \nonumber \\
 \nonumber \\
 &&= \displaystyle \frac{Nx}{\left\{N-(1-x)(1-y_n)\right\}^2}
\end{eqnarray}
for some $y_n \in I_{n}$. Since
\begin{equation} \label{4.13}
\lim_{n \rightarrow \infty} y_n =[\overline{a}_0, \overline{a}_{-1}, \ldots]_R = a,
\end{equation}
the proof is completed.
\end{proof}

The stochastic property of $(\overline{a}_l)_{l \in \mathbb{Z}}$ under $\overline{\rho}_N$ is given as follows.
\begin{corollary} \label{cor.4.4}
For any $i \geq N$, we have
\begin{equation} \label{4.14}
\overline{\rho}_N (\left.\overline{a}_1 = i\right| \overline{a}_0, \overline{a}_{-1}, \ldots) = P_{N,i}(a) \quad \overline{\rho}_N \mbox{-}\mathrm{a.s.}
\end{equation}
where $a = [\overline{a}_0, \overline{a}_{-1}, \ldots]_R$ and $P_{N,i}$ is as in $(\ref{3.9})$.
\end{corollary}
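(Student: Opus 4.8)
The plan is to derive Corollary \ref{cor.4.4} directly from Theorem \ref{th.4.3} by the same mechanism used to pass from Proposition \ref{prop.BBL} to Proposition \ref{prop.BBLConsecinta}. The key observation is that, by Remark \ref{rem.4.2}(i), the event $\{\overline{a}_1 = i\}$ corresponds under the projection $E$ to the first digit of the $x$-coordinate equalling $i$, which by \eqref{3.2} is exactly the event $\{E \in I(i)\} = \{x \in [1-N/i,\ 1-N/(i+1))\}$. Hence $\overline{\rho}_N(\overline{a}_1 = i \mid \overline{a}_0, \overline{a}_{-1}, \ldots)$ equals $\overline{\rho}_N\bigl([1-N/i,\ 1-N/(i+1)) \times [0,1] \mid \overline{a}_0, \overline{a}_{-1}, \ldots\bigr)$.

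First I would note that since the conditioning $\sigma$-algebra is generated by $\overline{a}_0, \overline{a}_{-1}, \ldots$, which depend only on the $y$-coordinate (again by Remark \ref{rem.4.2}(i), as these are $a_1(y), a_2(y), \ldots$), conditioning on this $\sigma$-algebra and taking the probability of an interval in the $x$-coordinate is precisely what Theorem \ref{th.4.3} computes. Writing $F(x) := \overline{\rho}_N([0,x] \times [0,1] \mid \overline{a}_0, \overline{a}_{-1}, \ldots) = \frac{Nx}{N - (1-x)(1-a)}$ for the conditional distribution function, I would then compute, $\overline{\rho}_N$-a.s.,
\begin{equation} \nonumber
\overline{\rho}_N (\overline{a}_1 = i \mid \overline{a}_0, \overline{a}_{-1}, \ldots) = F\!\left(1 - \frac{N}{i+1}\right) - F\!\left(1 - \frac{N}{i}\right).
\end{equation}
Substituting $x = 1 - N/(i+1)$ and $x = 1 - N/i$ into the closed form from \eqref{4.10} and simplifying the difference of the two rational expressions should collapse, after clearing denominators, to $\frac{a + N - 1}{(a+i)(a+i-1)} = P_{N,i}(a)$; this is the same algebraic identity already verified in the proof of Proposition \ref{prop.BBLConsecinta}, with $s_n$ replaced by $a$.

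The step requiring the most care is the measure-theoretic justification that one may legitimately evaluate the conditional distribution function $F$ at the two endpoints and subtract: that is, that the regular conditional probability $\overline{\rho}_N(\cdot \mid \overline{a}_0, \overline{a}_{-1}, \ldots)$ restricted to sets of the form $A \times [0,1]$ is, almost surely, a genuine probability measure on $[0,1]$ whose distribution function is the $F$ of Theorem \ref{th.4.3}. Since $F$ is continuous and strictly increasing in $x$, no atom issues arise at the endpoints, so the subtraction is valid once this is in place; the existence of a regular conditional probability is standard on the Borel space $[0,1]^2$. Apart from this bookkeeping, which can be handled in a line or two by invoking Theorem \ref{th.4.3}, the proof is a short computation. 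I would close by remarking that \eqref{4.14} is the exact analogue of the second relation in \eqref{3.8}, now for the stationary doubly-infinite digit process, and that summing over $i \geq N$ recovers \eqref{3.10} with $x = a$, as a consistency check.
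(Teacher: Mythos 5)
Your proposal is correct, but it takes a slightly different route from the paper. The paper does not deduce the corollary from the \emph{statement} of Theorem \ref{th.4.3}; instead it re-runs the same limiting argument as in that theorem's proof: it writes $\overline{\rho}_N(\overline{a}_1=i \mid \overline{a}_0,\ldots,\overline{a}_{-n})$ as a ratio of $\overline{\rho}_N$-measures of the strip $[1-N/i,\,1-N/(i+1))\times I_n$ over $[0,1]\times I_n$, evaluates the inner integral to get $\frac{1}{\rho_N(I_n)}\int_{I_n}P_{N,i}(y)\,\rho_N(\mathrm{d}y)=P_{N,i}(y_n)$ for some $y_n\in I_n$ by the mean value theorem, and then lets $n\to\infty$ using \eqref{4.13}. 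You instead treat the corollary as a formal consequence of Theorem \ref{th.4.3}: identify $\{\overline{a}_1=i\}$ with $I(i)\times[0,1]$ via \eqref{4.9} and \eqref{3.2}, and take the difference of the conditional distribution function $F$ at the two endpoints, which is exactly the algebra of Proposition \ref{prop.BBLConsecinta} with $s_n$ replaced by $a$ (indeed $F(1-\tfrac{N}{i+1})-F(1-\tfrac{N}{i})=\tfrac{i+1-N}{a+i}-\tfrac{i-N}{a+i-1}=P_{N,i}(a)$). Your route is more economical and makes the parallel with Section \ref{section3} explicit, at the price of the measure-theoretic bookkeeping you correctly flag; note that since the two endpoints are fixed numbers for fixed $i$, you do not even need a regular conditional distribution --- the a.s.\ statements of Theorem \ref{th.4.3} at those two values, finite additivity of conditional expectation, and the fact that the slices $\{1-N/i\}\times[0,1]$ and $\{1-N/(i+1)\}\times[0,1]$ are $\overline{\rho}_N$-null already suffice. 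The paper's route avoids this bookkeeping altogether by conditioning on finite cylinders and passing to the limit, essentially duplicating the proof of Theorem \ref{th.4.3}.
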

\begin{proof}
Let $I_{n}$ be as in the proof of Theorem \ref{th.4.3}.
We have
\begin{equation} \label{4.15}
\overline{\rho}_N (\left.\overline{a}_1 = i\,\right| \,
\overline{a}_0, \overline{a}_{-1}, \ldots) = \lim_{n \rightarrow \infty}
\overline{\rho}_N (\left.\overline{a}_1 = i\,\right| \,I_{n}).
\end{equation}
We have
\begin{equation}
(\overline{a}_1 = i) = \left[1-\frac{N}{i}, 1-\frac{N}{i+1}\right) \times [0, 1], \quad i \geq N.
\end{equation}
Now
\begin{eqnarray} \label{4.16}
\overline{\rho_N} \left( \left.\left[1-\frac{N}{i}, 1-\frac{N}{i+1}\right) \times [0, 1] \right| I_{n}\right) &=&
\frac{\overline{\rho}_N \left( \left[1-\frac{N}{i}, 1-\frac{N}{i+1}\right) \times I_{n}\right)}
{\overline{\rho}_N ([0, 1] \times I_{n})} \nonumber \\
\nonumber \\
& = & \frac{1}{\rho_N (I_{n})} \int_{I_{n}} P_{N,i}(y)\, \rho_N(\mathrm{d}y) \nonumber \\
\nonumber \\
& = & P_{N,i}(y_n)
\end{eqnarray}
for some $y_n \in I_{n}$. From (\ref{4.13}), the proof is completed.
\end{proof}

\begin{remark} \label{rem.4.5}
The strict stationarity of $\left(\overline{a}_l\right)_{l \in \mathbb{Z}}$, under $\overline{\rho}_N$ implies that
\begin{equation} \label{4.17}
\overline{\rho}_N(\left.\overline{a}_{l+1} = i\, \right|\, \overline{a}_l,
\overline{a}_{l-1}, \ldots)
= P_{N,i}(a) \quad \overline{\rho}_N \mbox{-}\mathrm{a.s.}
\end{equation}
for any $i \geq N$ and $l \in \mathbb{Z}$, where
$a = [\overline{a}_l, \overline{a}_{l-1}, \ldots]_R$.
The last equation emphasizes that $\left(\overline{a}_l\right)_{l \in \mathbb{Z}}$ is a chain of infinite order in the theory of dependence with complete connections \cite{IG-2009}.
\end{remark}
Define extended random variables $\left(\overline{s}_l\right)_{l \in \mathbb{Z}}$ as
$\overline{s}_l := [\overline{a}_l, \overline{a}_{l-1}, \ldots]_R$, $l \in \mathbb{Z}$.
Clearly, $\overline{s}_l = \overline{s}_{0} \circ (\overline{R}_N)^l$, $l \in \mathbb{Z}$.
It follows from Corollary \ref{cor.4.4} that $\left(\overline{s}_l\right)_{l \in \mathbb{Z}}$ is a strictly stationary $[0, 1)$-valued Markov process on $\left([0, 1]^2,{\mathcal{B}}^2_{[0, 1]}, \overline{\rho}_{N} \right)$
with the following transition mechanism: from state $\overline{s} \in [0, 1]$ the possible transitions are to any state $1 - N/(\overline{s} + i)$ with corresponding transition probability $P_{N,i}(\overline{s})$, $i \geq N$.
Clearly, for any $l \in \mathbb{Z}$ we have
\begin{equation}
\overline{\rho}_{N}(\overline{s}_l < x ) = \overline{\rho}_{N}([0, 1] \times [0,x)) = \rho_{N}([0,x)), \quad x \in [0, 1].
\end{equation}
Motivated by Theorem \ref{th.4.3}, we shall consider the one-parameter family $\{\rho_{N, t}: t \in [0, 1]\}$
of (conditional) probability measures on $\left([0, 1], {\mathcal{B}}_{[0, 1]} \right)$
defined by their distribution functions
\begin{equation}
\rho_{N,t} ([0, x]) := \frac{Nx}{N - (1-x)(1-t)}, \quad x, t \in [0, 1]. \label{4.20}
\end{equation}
Note that $\rho_{N,1} = \lambda$.

For any $t \in [0, 1]$ put
\begin{equation}
s_{0,t} := t,\quad
s_{n,t} := 1 - \frac{N}{a_n + s_{n-1,t}}, \quad n \in \mathbb{N}_+. \label{4.21}
\end{equation}
\begin{remark}
It follows from the properties just described of the process $(\overline{s}_l)_{l \in \mathbb{Z}}$
that the sequence $(s_{n,t})_{n \in {\mathbb{N}}_+}$ is an $[0, 1]$-valued Markov chain on
$\left([0, 1],{\mathcal B}_{[0,1]}, \rho_{N,t} \right)$
which starts at $s_{0,t} := t$ and has the following transition mechanism:
from state $s \in [0, 1]$ the possible transitions are to any state $1 - N/(s+i)$ with corresponding transition probability $P_i(s)$, $i \geq N$.
\end{remark}
\section{Perron-Frobenius operators} \label{section5}

Let $\left([0, 1],{\mathcal B}_{[0, 1]}, R_N, \rho_N \right)$ be as in Section 2.
In this section, we derive its Perron-Frobenius operator.

Let $\mu$ be a probability measure on $\left([0, 1], {\mathcal{B}}_{[0, 1]}\right)$
such that $\mu\left(R_N^{-1}(A)\right) = 0$ whenever $\mu(A) = 0$ for
$A \in {\mathcal{B}}_{[0, 1]}$.
For example, this condition is satisfied if $R_N$ is $\mu$-preserving, that is,
$\mu R_N^{-1} = \mu$.
Let %
$
L^1([0, 1], \mu):=\{f: [0, 1] \rightarrow \mathbb{C} : \int^{1}_{0} |f |\mathrm{d}\mu < \infty \}.
$
The \textit{Perron-Frobenius operator} of $\left([0, 1], {\mathcal B}_{[0, 1]}, R_N, \mu \right)$
can be defined by the Radon-Nikodym theorem as the unique linear and positive operator on the Banach space $L^1([0, 1],\mu)$ satisfying:
\begin{equation}
\int_{A} Uf \,\mathrm{d}\mu = \int_{R_{N}^{-1}(A)}f\, \mathrm{d}\mu \quad
\mbox{ for all }
A \in {\mathcal{B}}_{[0, 1]},\, f \in L^1 \left([0, 1], \mu \right). \label{5.1}
\end{equation}
About more details, see \cite{BG-1997, IK-2002}.
\begin{proposition} \label{prop.5.1}
Let $\left([0, 1],{\mathcal B}_{[0, 1]}, R_{N}, \rho_{N} \right)$ be as in Section 2, and let $U$ denote its Perron-Frobenius operator.
Then the following holds:
\begin{enumerate}
\item[(i)]
The following equation holds:
\begin{equation}
Uf(x) = \sum_{i \geq N} P_{N,i}(x)\,f\left(u_{N,i}(x)\right), \quad
f \in L^1([0, 1], \rho_{N}), \label{5.2}
\end{equation}
where $P_{N,i}$ and $u_{N,i}$ are as in (\ref{3.9}) and (\ref{4.1}), respectively.
\item[(ii)]
Let $\mu$ be a probability measure on $\left([0, 1],{\mathcal{B}}_{[0, 1]}\right)$
such that $\mu$ is absolutely continuous with respect to
the Lebesgue measure $\lambda$
%(and denote $\mu \ll \lambda$, i.e., if $\mu(A) = 0$
%for every set $A$ with $\lambda(A) = 0$)
and let $h := \mathrm{d}\mu / \mathrm{d} \lambda$  a.e. in $[0, 1]$.
Then for any $n \in \mathbb{N}_+$ and $A \in {\mathcal{B}}_{[0, 1]}$,
we have
\begin{equation}
\mu \left(R_{N}^{-n}(A)\right)
= \int_{A} U^nf(x) \mathrm{d}\rho_{N}(x) \label{5.3}
\end{equation}
where $f(x):= \left(\log\left(\frac{N}{N-1}\right)\right) (x+N-1) h(x)$ for
$x \in [0, 1]$.
\end{enumerate}
\end{proposition}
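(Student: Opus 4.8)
The plan is to prove the two parts in sequence, with part (i) being the computational core and part (ii) following by a routine change-of-density argument plus induction.

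For part (i), I would start from the defining property \eqref{5.1} of the Perron--Frobenius operator and compute $\int_{R_N^{-1}(A)} f\,\mathrm{d}\rho_N$ directly. The key structural fact is that $R_N$ is a countable-to-one map whose branches are exactly the inverse maps $u_{N,i}$ from \eqref{4.1}: indeed $R_N^{-1}(A) = \bigcup_{i \geq N} u_{N,i}(A)$, a disjoint union, since $u_{N,i}$ is a right inverse of $R_N$ by \eqref{4.2} and $u_{N,i}([0,1]) = [1-N/i,\,1-N/(i+1)) = I(i)$ by \eqref{3.2}. So I would write
\begin{equation} \nonumber
\int_{R_N^{-1}(A)} f\,\mathrm{d}\rho_N = \sum_{i \geq N} \int_{u_{N,i}(A)} f(t)\,\rho_N(\mathrm{d}t) = \sum_{i\geq N} \int_A f(u_{N,i}(x))\, \left|u_{N,i}'(x)\right|\, \frac{1}{\log(N/(N-1))}\,\frac{\mathrm{d}x}{u_{N,i}(x)+N-1},
\end{equation}
then substitute $u_{N,i}'(x) = N/(x+i)^2$ and $u_{N,i}(x)+N-1 = N - N/(x+i) + N - 1 - \dots$ — more precisely $u_{N,i}(x) + N - 1 = N + (x+i-1) - N/(x+i) $; I would simplify the quantity $|u_{N,i}'(x)| / (u_{N,i}(x)+N-1)$ and check it equals $P_{N,i}(x)/(x+N-1)$, the extra $(x+N-1)$ being absorbed when I re-express $\mathrm{d}\rho_N$ on the left side as $\frac{1}{\log(N/(N-1))}\frac{\mathrm{d}x}{x+N-1}$. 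Matching this against $\int_A Uf\,\mathrm{d}\rho_N$ for all $A$ forces \eqref{5.2}.

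For part (ii), I would first handle $n=1$. Writing $\mu(R_N^{-1}(A)) = \int_{R_N^{-1}(A)} h\,\mathrm{d}\lambda$ and repeating the branch decomposition above (now with Lebesgue density $h$ instead of $\rho_N$), the same Jacobian computation gives $\mu(R_N^{-1}(A)) = \int_A \sum_{i\geq N} h(u_{N,i}(x)) |u_{N,i}'(x)|\,\mathrm{d}x$. The claim is that this equals $\int_A Uf\,\mathrm{d}\rho_N$ with $f = \log(N/(N-1))\,(x+N-1)\,h(x)$: one checks that $\log(N/(N-1))\,(x+N-1)\cdot[\text{integrand of } Uf] = \sum_i P_{N,i}(x) f(u_{N,i}(x)) \cdot \dots$ reduces to the same sum after cancelling the $(x+N-1)$ and $\log$ factors against $\mathrm{d}\rho_N = \frac{1}{\log(N/(N-1))}\frac{\mathrm{d}x}{x+N-1}$. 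Equivalently, and more cleanly, I would note that $\mathrm{d}\mu/\mathrm{d}\rho_N = f$ by the very definition of $f$, so $\mu(R_N^{-1}(A)) = \int_{R_N^{-1}(A)} f\,\mathrm{d}\rho_N = \int_A Uf\,\mathrm{d}\rho_N$ by \eqref{5.1}. This is the base case. For the inductive step, assuming \eqref{5.3} for $n$, apply it to the set $R_N^{-1}(A)$ in place of $A$: $\mu(R_N^{-n}(R_N^{-1}(A))) = \int_{R_N^{-1}(A)} U^n f\,\mathrm{d}\rho_N$; then one more application of \eqref{5.1} with the function $U^n f \in L^1([0,1],\rho_N)$ gives $\int_A U(U^n f)\,\mathrm{d}\rho_N = \int_A U^{n+1} f\,\mathrm{d}\rho_N$, completing the induction.

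The main obstacle is the bookkeeping in part (i): one must verify carefully that $\{u_{N,i}([0,1])\}_{i\geq N}$ partitions $[0,1)$ up to a null set, that term-by-term integration over the countable union is justified (which follows from positivity of $f$, or from $f\in L^1$ and dominated convergence after reducing to $f\geq 0$), and above all that the Jacobian identity $|u_{N,i}'(x)|\cdot\frac{x+N-1}{u_{N,i}(x)+N-1} = P_{N,i}(x)$ holds — this is the one genuine computation, and it is exactly the identity that makes $\rho_N$ invariant (the case $f\equiv 1$, using \eqref{3.10}), so it is essentially forced. Everything else is formal manipulation of \eqref{5.1}.
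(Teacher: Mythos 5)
Your proposal is correct and follows essentially the same route as the paper: part (i) is the branch decomposition $R_N^{-1}(A)=\bigcup_{i\ge N}u_{N,i}(A)$ plus the change of variables $y\mapsto u_{N,i}(y)$ (the paper phrases it via the restricted branches $R_{N,i}$), and part (ii) is the same induction on \eqref{5.1}, with your observation $f=\mathrm{d}\mu/\mathrm{d}\rho_N$ being exactly the paper's base case. Only note that your intermediate expression ``$u_{N,i}(x)+N-1=N+(x+i-1)-N/(x+i)$'' is miswritten (it should be $N-N/(x+i)=N(x+i-1)/(x+i)$), but the identity you actually use, $|u_{N,i}'(x)|\,(x+N-1)/\bigl(u_{N,i}(x)+N-1\bigr)=P_{N,i}(x)$, is correct.
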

\begin{proof}
(i) Let $R_{N,i}$ denote the restriction of $R_N$ to the subinterval
$I(i):=\left(1-\frac{N}{i}, 1-\frac{N}{i+1}\right]$, $i \geq N$, that is,
\begin{equation}
R_{N,i}(x) = \frac{N}{1-x} - i, \quad x \in I(i). \label{5.4}
\end{equation}
Let $C(A):=\left(R_{N}\right)(A)$ and $C_{i}(A):=\left(R_{N,i}\right)^{-1}(A)$ for
$A \in {\mathcal B}_{[0, 1]}$.
Since $C(A)=\bigcup_{i}C_i(A)$ and $C_i\cap C_j$ is a null set when $i \neq j$,
we have
\begin{equation}
\int_{C(A)} f \,\mathrm{d} \rho_N = \sum_{i \geq N} \int_{C_i(A)}f\, \mathrm{d} \rho_N,
\quad
f \in L^1([0, 1], \rho_N),\,A \in {\mathcal{B}}_{[0, 1]}. \label{5.5}
\end{equation}
For any $i \geq N$, by the change of variable
$x = \left(R_{N,i}\right)^{-1}(y) = 1 - \displaystyle \frac{N}{y+i}$,
we successively obtain
\begin{eqnarray}
\int_{C_i(A)}f(x) \,\rho_N(\mathrm{d}x) &=& \left(\log\left(\frac{N+1}{N}\right)\right)^{-1} \int_{C_i(A)} \frac{f(x)}{x+N-1}\,\mathrm{d}x \nonumber \\
\nonumber \\
&=& \left(\log\left(\frac{N+1}{N}\right)\right)^{-1} \int_{A} \frac{1}{(y+i)(y+i-1)}f\left(u_{N,i}(y)\right) \mathrm{d}y \nonumber \\
\nonumber \\
&=& \int_{A} P_{N,i}(y)\, f\left(u_{N,i}(y)\right)\,\rho_N (\mathrm{d}y). \label{5.6}
\end{eqnarray}
Now, (\ref{5.2}) follows from (\ref{5.5}) and (\ref{5.6}).

\noindent
(ii) We will use mathematical induction.
For $n=0$, the equation (\ref{5.3}) holds by definitions of $f$ and $h$.
Assume that (\ref{5.3}) holds for some $n \in \mathbb{N}$.
Then
\begin{equation} \label{5.7}
\mu \left(R_N^{-(n+1)}(A)\right) =
\mu \left(R_N^{-n}\left(R_N^{-1}(A)\right)\right)
= \int_{C(A)} U^n f(x)\,\rho_N(\mathrm{d}x),
\end{equation}
and by definition, we have
\begin{equation} \label{5.8}
\int_{C(A)} U^n f(x) \,\rho_N(\mathrm{d}x) = \int_{A} U^{n+1} f(x) \,\rho_N(\mathrm{d}x).
\end{equation}
Therefore,
\begin{equation} \label{5.9}
\mu \left(R_N^{-(n+1)}(A)\right)
= \int_{A} U^{n+1} f(x)\rho_N(\mathrm{d}x)
\end{equation}
which ends the proof.

\end{proof}

\section{Random systems with complete connections and the Gauss-Kuzmin-type problem}

For everything concerning dependence with complete connections and, in particular, random systems with complete connections (RSCC) we refer the reader to \cite{IG-2009}.
For applications of RSCC in continued fraction expansions see also \cite{Iosifescu&Sebe-2013, Lascu-2013, Sebe-2001, Sebe-2002, Sebe&Lascu-2014}.

In what follows we give the RSCC associated with the R\'enyi-type continued fraction expansions.
The ergodic behaviour of this RSCC gives us the asymptotic of $R^{-n}_N$ as $n \rightarrow \infty$ that represents the Gauss-Kuzmin-type problem for R\'enyi-type continued fraction expansions.

Proposition \ref{prop.BBLConsecinta} leads us to the RSCC
$\left\{ \left([0, 1], {\mathcal{B}}_{[0, 1]}\right), \Lambda, u, P \right\}$ as follows:
\begin{equation} \label{6.1}
\left\{
\begin{array}{ll}
u : [0, 1] \times \Lambda \rightarrow [0, 1]; \quad &u(x,i):=u_{N,i}(x)=1 - \displaystyle \frac{N}{x+i},
\\
P : [0, 1] \times \Lambda \rightarrow [0, 1]; \quad  &P(x,i):= P_{N,i}(x)=\displaystyle \frac{x+N-1}{(x+i)(x+i-1)},
\\
\Lambda = \{N, N+1, \ldots \}.
\end{array}
\right.
\end{equation}
\begin{lemma} \label{lem.6.2}
$\left\{ \left([0, 1], {\mathcal{B}}_{[0, 1]}\right), \Lambda, u, P \right\}$ is an RSCC with contraction.
\end{lemma}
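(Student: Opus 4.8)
The plan is to verify the two defining conditions for an RSCC with contraction in the sense of \cite{IG-2009}: first that $\{([0,1],{\mathcal B}_{[0,1]}),\Lambda,u,P\}$ is a genuine RSCC (i.e., the maps $u(\cdot,i)$ and the probabilities $P(\cdot,i)$ enjoy the required regularity, and $\sum_{i\geq N}P(x,i)=1$), and then that it is \emph{with contraction}, meaning that the associated contraction coefficients $r_n$ satisfy $r_n\to 0$ (in fact geometrically). The normalization $\sum_{i\geq N}P(x,i)=1$ for all $x\in[0,1]$ is exactly equation (\ref{3.10}), so that part is already done. The measurability/continuity requirements are immediate since, for each fixed $i\geq N$, both $u_{N,i}(x)=1-N/(x+i)$ and $P_{N,i}(x)=(x+N-1)/((x+i)(x+i-1))$ are rational functions of $x$ with no poles on $[0,1]$ (the smallest denominator factor is $x+N-1\geq N-1\geq 1$), hence real-analytic on $[0,1]$.

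The substantive step is the contraction estimate. The relevant quantity is the one-step contraction coefficient
\begin{equation} \label{6.2}
r_1 := \sup_{i\in\Lambda}\ \sup_{x\neq x'\in[0,1]} \frac{|u_{N,i}(x)-u_{N,i}(x')|}{|x-x'|},
\end{equation}
and then one shows $r_n\leq r_1^{\,n}$ (or uses the finer $n$-step coefficients built from products over admissible words). Here I would simply compute the derivative: $u_{N,i}'(x)=N/(x+i)^2$, so on $[0,1]$ we have $0<u_{N,i}'(x)\leq N/i^2\leq N/N^2=1/N\leq 1/2$ since $i\geq N\geq 2$. By the mean value theorem this gives $|u_{N,i}(x)-u_{N,i}(x')|\leq \tfrac1N |x-x'|$ uniformly in $i\geq N$, hence $r_1\leq 1/N<1$. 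Iterating along any admissible string $(i_1,\dots,i_n)$, the composition $u_{N,i_1}\circ\cdots\circ u_{N,i_n}$ has derivative bounded by $N^{-n}$, so the $n$-step contraction coefficient is at most $N^{-n}\to 0$. This is precisely the definition of an RSCC with contraction.

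I expect the main (and only real) obstacle to be bookkeeping: matching the inequality $r_1\le 1/N$ to whichever precise formulation of ``with contraction'' is adopted from \cite{IG-2009} — in some treatments one needs the contraction coefficient of the \emph{associated Markov operator on Lipschitz functions}, or a condition of the form $\sum_i P(x,i)\,\mathrm{Lip}(u(\cdot,i))<1$ uniformly in $x$, rather than a uniform Lipschitz bound on each branch. Both are comfortably satisfied here: the uniform bound $\mathrm{Lip}(u_{N,i})\le 1/N$ together with $\sum_i P(x,i)=1$ gives $\sum_i P(x,i)\,\mathrm{Lip}(u(\cdot,i))\le 1/N<1$. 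One should also note that the family is equicontinuous and the state space $[0,1]$ is compact, so no integrability subtlety arises. Thus the proof reduces to the elementary derivative bound above plus a citation to the appropriate definition, and I would present it in that order: regularity of $u,P$; the normalization (\ref{3.10}); the derivative computation giving $r_1\le 1/N$; and the conclusion that $r_n\le N^{-n}\to0$, whence the RSCC is with contraction.
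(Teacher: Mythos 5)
Your proposal is correct and follows essentially the same route as the paper: both verify the defining conditions of an RSCC with contraction (Definition 3.1.15 in \cite{IG-2009}) by the derivative computation $u_{N,i}'(x)=N/(x+i)^2\le N/i^2\le 1/N<1$ for $i\ge N$, together with the normalization (\ref{3.10}) and the Lipschitz regularity of $P(\cdot,i)$ on $[0,1]$. The only cosmetic difference is that the paper records the derivative of $P_{N,i}$ explicitly and simply states $\sup_x|\frac{\mathrm{d}}{\mathrm{d}x}P(x,i)|<\infty$, while you argue this via real-analyticity; the substance is identical.
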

\begin{proof}
We have
\begin{eqnarray}
  \frac{\mathrm{d}}{\mathrm{d}x}u(x,i) &=& \frac{\mathrm{d}}{\mathrm{d}x}u_{N,i}(x) = \frac{N}{(x+i)^2}, \label{6.2} \\
  \frac{\mathrm{d}}{\mathrm{d}x}P(x,i) &=& \frac{\mathrm{d}}{\mathrm{d}x}P_{N,i}(x)=\frac{i-N}{(x+i-1)^2} - \frac{i+1-N}{(x+i)^2} \label{6.3}
\end{eqnarray}
for any $x \in [0, 1]$ and $i \in \Lambda$.
Thus,
\begin{eqnarray}
  \sup_{x \in [0, 1]}\left|\frac{\mathrm{d}}{\mathrm{d}x}u(x,i)\right| &\leq& \frac{N}{i^2}, \quad i \in \Lambda \label{6.4} \\
  \sup_{x \in [0, 1]}\left|\frac{\mathrm{d}}{\mathrm{d}x}P(x,i)\right| &<& \infty, \quad i \in \Lambda. \label{6.5}
\end{eqnarray}
Hence the requirements of Definition 3.1.15 in \cite{IG-2009} are fulfilled.
\end{proof}
Next, let  $L(W)$ denote the Banach space of all complex-valued Lipschitz continuous functions on $W$ with the following norm $\|\cdot\|_L$:
\begin{equation} \label{6.6}
\left\| f \right\|_L := \sup_{w \in W} |f(w)| + \sup_{w' \ne w''} \frac{|f(w') - f(w'')|}{|w' - w''|}, \quad f \in L(W).
\end{equation}
Whatever $t \in [0, 1]$ the Markow chain associated with RSCC (\ref{6.1}) $(s_{n,t})_{n \in {\mathbb{N}}_+}$ in (\ref{4.21})
has the transition operator $U$ in (\ref{5.2}) with the transition probability function defined as
\begin{equation}
Q(x, B) = \sum_{ \{ i \in \Lambda : u_{N,i}(x) \in B \} } P_{N,i}(x), \quad x \in [0, 1], B \in {\mathcal{B}}_{[0, 1]}. \label{6.7}
\end{equation}
\begin{lemma} \label{lem3.2}
$\left\{ \left([0, 1], {\mathcal{B}}_{[0, 1]}\right), \Lambda, u, P \right\}$ has a regular associated Markov chain.
\end{lemma}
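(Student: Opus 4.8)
The plan is to verify the conditions from the theory of RSCC (as in \cite{IG-2009}, Chapter 3) that guarantee a \emph{regular} associated Markov chain, namely ergodicity together with aperiodicity and the uniqueness of the stationary probability. Concretely, I would show that the transition operator $U$ in (\ref{5.2}) maps the Banach space $L([0,1])$ of Lipschitz functions into itself, and that some iterate $U^m$ is a contraction in the Lipschitz seminorm, i.e. there exist $m\in\mathbb{N}_+$ and $0<r<1$ with $s(U^m f)\le r\, s(f)$ for all $f\in L([0,1])$, where $s(\cdot)$ denotes the Lipschitz seminorm. Combined with Lemma \ref{lem.6.2} (the RSCC is with contraction), this puts us in the setting where the general ergodicity theorem for RSCC with contraction (Theorem 3.1.24 / Corollary in \cite{IG-2009}) applies and yields regularity.

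First I would record that $\rho_N$ in (\ref{2.5}) is $U$-invariant: from (\ref{5.1}) and the fact that $R_N$ preserves $\rho_N$ we get $U\mathbf{1}=\mathbf{1}$ and $\int_0^1 Uf\,\mathrm{d}\rho_N=\int_0^1 f\,\mathrm{d}\rho_N$. Next, using the explicit series formula $Uf(x)=\sum_{i\ge N}P_{N,i}(x)f(u_{N,i}(x))$, I would estimate the Lipschitz seminorm of $Uf$. Differentiating term by term, one gets
\begin{equation} \label{6.8}
(Uf)'(x)=\sum_{i\ge N}\Bigl(P_{N,i}'(x)\,f(u_{N,i}(x))+P_{N,i}(x)\,u_{N,i}'(x)\,f'(u_{N,i}(x))\Bigr),
\end{equation}
so that, writing $\|f\|_\infty$ for the sup norm and $s(f)$ for the Lipschitz constant,
\begin{equation} \label{6.9}
s(Uf)\le \Bigl(\sum_{i\ge N}\sup_x|P_{N,i}'(x)|\Bigr)\|f\|_\infty + \Bigl(\sup_x\sum_{i\ge N}P_{N,i}(x)\,u_{N,i}'(x)\Bigr)s(f).
\end{equation}
By (\ref{6.2})--(\ref{6.5}) the first sum converges and $u_{N,i}'(x)=N/(x+i)^2\le N/i^2\le 1$ for $i\ge N\ge 2$, with strict inequality except possibly at the endpoint; a little care shows the coefficient of $s(f)$ is strictly less than $1$. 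Since $\|f\|_\infty$ can in turn be controlled on the invariant-measure space (the constant functions are fixed, and $\|Uf\|_\infty\le\|f\|_\infty$), iterating (\ref{6.9}) gives the desired contraction $s(U^m f)\le r\,s(f)$ for suitable $m$ and $r<1$.

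Having established that $U$ is a contraction on $L([0,1])$ in the appropriate sense, regularity of the associated Markov chain follows: there is a unique $U$-invariant probability on $([0,1],\mathcal{B}_{[0,1]})$ (necessarily $\rho_N$), the chain is ergodic, and $U^n f\to \int_0^1 f\,\mathrm{d}\rho_N$ uniformly for $f\in L([0,1])$, with geometric rate. This is exactly the statement that the chain is regular in the sense of \cite{IG-2009}, Definition 3.3.3; I would cite the relevant theorem there (regularity is implied by the RSCC being with contraction together with the transition operator acting nicely on the Lipschitz space, cf.\ \cite{IG-2009}, Theorem 3.3.X).

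The main obstacle I anticipate is the bookkeeping needed to show the coefficient of $s(f)$ in the contraction estimate is uniformly $<1$ over $x\in[0,1]$: the naive bound $u_{N,i}'(x)\le 1$ is not strict at $x$ near the relevant endpoint for $i=N$, so one must either combine the two terms in (\ref{6.8}) more carefully, or pass to a second iterate $U^2$ where the geometric gain is unambiguous, or invoke the general RSCC machinery which extracts the contraction abstractly from (\ref{6.4})--(\ref{6.5}) without needing an explicit single-step rate. I expect the cleanest route is the last one: verify the hypotheses of the general regularity theorem in \cite{IG-2009} and let it do the work, since Lemma \ref{lem.6.2} already supplies the contraction property of the RSCC and the smoothness bounds (\ref{6.4})--(\ref{6.5}) supply the rest.
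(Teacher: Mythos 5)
There is a genuine gap. Your displayed estimate for $s(Uf)$ has the form $s(Uf)\le C\|f\|_\infty+\theta\,s(f)$ with $\theta\le 1/N<1$ (indeed $u_{N,i}'(x)=N/(x+i)^2\le 1/N$, so your worry about the endpoint is misplaced), but iterating such an inequality does \emph{not} give the pure seminorm contraction $s(U^mf)\le r\,s(f)$: the $\|f\|_\infty$ term persists and you only obtain a Doeblin--Fortet (Lasota--Yorke) type bound $s(U^nf)\le \theta^n s(f)+C'\|f\|_\infty$. The step ``since $\|Uf\|_\infty\le\|f\|_\infty$, iterating gives the desired contraction'' is exactly where the argument breaks; and a Doeblin--Fortet inequality by itself does not yield regularity (uniqueness of the stationary measure and uniform convergence $U^nf\to U^\infty f$). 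Your fallback --- ``invoke the general RSCC machinery which extracts the contraction abstractly from (\ref{6.4})--(\ref{6.5})'' --- is also insufficient: those bounds only say the RSCC is with contraction, which is Lemma \ref{lem.6.2}, and an RSCC with contraction need not have a regular associated chain; that is precisely why the present lemma requires a separate argument. (A repair of your direct route does exist: since $\sum_{i\ge N}P_{N,i}\equiv 1$, one has $\sum_{i\ge N}P_{N,i}'\equiv 0$, so in the first term of your derivative formula one may replace $f(u_{N,i}(x))$ by $f(u_{N,i}(x))-f(u_{N,N}(x))$ and bound it by $s(f)$ times $\sum_i|P_{N,i}'(x)|\,|u_{N,i}(x)-u_{N,N}(x)|$; but then you must actually check that the resulting total coefficient is $<1$, a computation you have not done.)

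For comparison, the paper's proof verifies the standard sufficient condition for regularity of the transition operator $U$ of (\ref{5.2}) on $L([0,1])$, namely Theorem 3.2.13 of \cite{IG-2009}: it suffices to exhibit a point $x^*$ such that $\mathrm{dist}(\sigma_n(x),x^*)\to 0$ for every $x\in[0,1]$, where $\sigma_n(x)$ is the support of the $n$-step transition probability $Q^n(x,\cdot)$. This is done constructively by following one inverse branch: set $x_0=x$ and $x_{n+1}=1-N/(x_n+N+1)$; then $x_n\in\sigma_n(x)$ and $x_n$ converges to the fixed point $x^*=(\sqrt{N^2+4}-N)/2$, so $\mathrm{dist}(\sigma_n(x),x^*)\le|x_n-x^*|\to 0$. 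Some concrete orbit-type (or irreducibility/aperiodicity) verification of this kind is the missing ingredient in your proposal; regularity cannot be extracted from the contraction bounds (\ref{6.4})--(\ref{6.5}) alone.
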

\begin{proof}
The Markov chain corresponding to RSCC (\ref{6.1}) is regular if its transition operator $U$ is regular with respect to $L([0, 1])$.

Now, in order to prove the regularity of the associated Markov operator $U$ defined by (\ref{5.2}) with respect to $L([0, 1])$ we have, according to Theorem 3.2.13 in \cite{IG-2009}, to find an element $x^* \in I$ such that $\lim_{n \rightarrow \infty} {\rm dist}(\sigma_n(x),x^*)=0$ for all $x \in [0, 1]$.
Here $(\sigma_n)$ denotes the support of measure $Q^{n}(w, \cdot)$, where $Q^{n}$ is the $n$-step transition probability function of the Markov chain $(s_{n,t})_{n \in {\mathbb{N}}_+}$.

Fix $x \in [0, 1]$.
Let us define the sequence $(x_n)_{n \geq 0}$ in $[0, 1]$, recursively by
\begin{equation}\label{6.8}
x_0:=x, \quad x_{n+1} := 1 - \displaystyle \frac{N}{x_n + N +1}, \quad n \geq 1.
\end{equation}
It is clear that $x_n \in (0, 1)$, $n \in \mathbb{N}$, and letting $n \rightarrow \infty$ in (\ref{6.8}) we get
\begin{equation}\label{6.9}
0< x^* = \lim_{n \rightarrow \infty} x_n = \frac{\sqrt{N^2+4} - N}{2} < 1
\end{equation}
Clearly, $x_{n+1} \in \sigma_1(x_n)$ and using Lemma 3.2.14 in \cite{IG-2009} and an induction argument lead us to the conclusion that
$x_n \in \sigma_n(x)$ for $n \in \mathbb{N}_+$.
Since
${\rm dist}(\sigma_n(x),\,x^*) \leq |x_n - x^*| \rightarrow 0$ as  $n \rightarrow \infty$
we obtain that $U$ is regular.
\end{proof}
\begin{proposition} \label{prop.6.3}
$\left\{ \left([0, 1], {\mathcal{B}}_{[0, 1]}\right), \Lambda, u, P \right\}$ is uniformly ergodic.
\end{proposition}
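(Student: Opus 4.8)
The plan is to deduce uniform ergodicity of the RSCC in (\ref{6.1}) directly from the two properties already established, namely that it is an RSCC with contraction (Lemma~\ref{lem.6.2}) and that its associated Markov chain is regular (Lemma~\ref{lem3.2}). The relevant general fact is the theorem in \cite{IG-2009} (Theorem~3.4.5 there, on RSCC with contraction whose associated Markov operator is regular with respect to a suitable Banach space of Lipschitz functions) which asserts that such an RSCC is uniformly ergodic. So the proof is essentially a verification that the hypotheses of that theorem are met in our setting, together with an explicit identification of the ergodic limit.

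First I would recall the precise definition of uniform ergodicity for an RSCC: the Markov operator $U$ on $L([0,1])$ must have a unique invariant probability measure $\rho_N$, and the iterates $U^n f$ must converge, uniformly in the starting point and at a uniform (geometric) rate, to $\int_{[0,1]} f \,\mathrm{d}\rho_N$ for every $f \in L([0,1])$; equivalently, $\|U^n - \rho_N\otimes 1\|$ tends to $0$ in the appropriate operator sense. Then I would invoke the contraction property: from (\ref{6.4}) we have $\sup_x |u'(x,i)| \le N/i^2$, and since $i$ ranges over $\Lambda = \{N, N+1, \ldots\}$ one checks that the contraction coefficient $r := \sup_i \sup_x |u'(x,i)| = N/N^2 = 1/N < 1$, while the boundedness of the derivatives of $P(\cdot,i)$ from (\ref{6.5}), summed against the probabilities $P_{N,i}$, gives the second ingredient ($\sum_i \sup_x |\partial_x P(x,i)| < \infty$, which is finite because $\partial_x P_{N,i}(x) = O(i^{-2})$). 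These are exactly the quantitative bounds entering the RSCC-with-contraction machinery. Combined with the regularity of $U$ on $L([0,1])$ from Lemma~\ref{lem3.2}, the cited theorem yields that $U$ is regular \emph{and} aperiodic, hence uniformly ergodic, with the unique $U$-invariant measure being the one whose density with respect to $\lambda$ makes $\rho_N$ in (\ref{2.5}) stationary --- indeed $U$ was constructed in Proposition~\ref{prop.5.1} precisely as the Perron--Frobenius operator of $R_N$ with respect to $\rho_N$, so $\rho_N$ is $U$-invariant, and regularity forces uniqueness.

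The main obstacle --- more a matter of care than of difficulty --- is checking that the Lipschitz-norm estimates genuinely place us in the scope of the ergodicity theorem rather than just the weaker regularity statement: one needs the uniform contraction constant $r < 1$ to be strict and the series $\sum_{i \in \Lambda} \sup_x |\partial_x P(x,i)|$ to converge, so that the transition operator $U$ acting on $L([0,1])$ is not merely a contraction in the sup-norm seminorm but actually has spectral radius $< 1$ on the quotient by constants. Given (\ref{6.2})--(\ref{6.5}) this is routine, but it is the step where the hypothesis ``with contraction'' must be used in its full quantitative form. Once that is in place, uniform ergodicity --- including the geometric convergence rate --- follows immediately from \cite{IG-2009}, and this is exactly what will be needed in the Gauss--Kuzmin-type theorem: applying it to $U^n f$ with $f$ the renormalized density of an arbitrary absolutely continuous $\mu$ (as in Proposition~\ref{prop.5.1}(ii)) gives $\mu(R_N^{-n}(A)) \to \rho_N(A)$ uniformly, with an explicit error term.
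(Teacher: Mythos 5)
Your proposal is correct and follows essentially the same route as the paper: the paper's proof is precisely the one-line application of Theorem~3.4.5 in \cite{IG-2009}, combining the contraction property from Lemma~\ref{lem.6.2} with the regularity established in Lemma~\ref{lem3.2}. The extra quantitative remarks you make (the contraction constant $1/N<1$ and the behaviour of $\partial_x P_{N,i}$) are consistent with, though more detailed than, what the paper records.
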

\begin{proof}
We apply Theorem 3.4.5 in \cite{IG-2009}.
Since RSCC $\left\{ \left([0, 1], {\mathcal{B}}_{[0, 1]}\right), \Lambda, u, P \right\}$ in (\ref{6.1}) is an RSCC with contraction and is regular then the statement holds.
\end{proof}
Now, Theorem 3.1.24 \cite{IG-2009} implies that $Q^n(\cdot, \cdot)$ converges uniformly to a unique stationary probability measure $Q^{\infty}$ on $\left( [0, 1], \mathcal{B}_{[0, 1]}\right)$.
\begin{proposition} \label{prop.6.4}
The probability $Q^{\infty}$ is the invariant probability measure of the transformation $R_N$.
\end{proposition}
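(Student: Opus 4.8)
The plan is to identify $Q^{\infty}$ by showing that $\rho_N$ is a stationary probability measure for the transition probability function $Q$ in (\ref{6.7}), and then invoke the uniqueness of the stationary measure guaranteed by uniform ergodicity (Proposition \ref{prop.6.3} together with Theorem 3.1.24 of \cite{IG-2009}). Concretely, stationarity of $\rho_N$ amounts to the identity $\int_{[0,1]} Q(x,B)\,\rho_N(\mathrm{d}x) = \rho_N(B)$ for all $B \in {\mathcal B}_{[0,1]}$, which in operator form is exactly the statement that the adjoint of $U$ fixes $\rho_N$, i.e. $\int_{[0,1]} Uf\,\mathrm{d}\rho_N = \int_{[0,1]} f\,\mathrm{d}\rho_N$ for all bounded measurable $f$.

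First I would recall that $U$ in (\ref{5.2}) is precisely the Perron--Frobenius operator of $\left([0,1],{\mathcal B}_{[0,1]},R_N,\rho_N\right)$, established in Proposition \ref{prop.5.1}(i). Taking $A = [0,1]$ in the defining relation (\ref{5.1}) of the Perron--Frobenius operator gives $\int_{[0,1]} Uf\,\mathrm{d}\rho_N = \int_{R_N^{-1}([0,1])} f\,\mathrm{d}\rho_N = \int_{[0,1]} f\,\mathrm{d}\rho_N$, since $R_N^{-1}([0,1]) = [0,1]$. Hence $\rho_N U = \rho_N$, which says exactly that $\rho_N$ is stationary for the Markov chain $(s_{n,t})_{n\in\mathbb{N}_+}$ with transition function $Q$. (Equivalently, one can argue directly from the fact that $R_N$ preserves $\rho_N$, recorded in Section 2: for $f = \mathbf{1}_B$ one checks $\int Q(x,B)\,\rho_N(\mathrm{d}x) = \int U\mathbf{1}_B\,\mathrm{d}\rho_N = \rho_N(R_N^{-1}(B)) = \rho_N(B)$.)

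Next, by Proposition \ref{prop.6.3} the RSCC (\ref{6.1}) is uniformly ergodic, so by Theorem 3.1.24 of \cite{IG-2009} the iterates $Q^n(\cdot,\cdot)$ converge uniformly to a \emph{unique} stationary probability measure $Q^{\infty}$ on $\left([0,1],{\mathcal B}_{[0,1]}\right)$. Since $\rho_N$ is a stationary probability measure by the previous step, uniqueness forces $Q^{\infty} = \rho_N$, and $\rho_N$ is by definition (\ref{2.5}) the invariant probability measure of $R_N$.

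The only genuine obstacle is the first step: one must be careful that the transition operator $U$ of the associated Markov chain coincides with the Perron--Frobenius operator of $R_N$ relative to $\rho_N$, so that stationarity of $\rho_N$ for $Q$ is equivalent to $R_N$ preserving $\rho_N$. This identification was already made in Section 5 and in the discussion following (\ref{6.7}), so the argument reduces to assembling these facts; no new estimates are required. A minor point to verify along the way is that $\rho_N U = \rho_N$ as an identity on all of (say) $L^\infty$ or on indicator functions — not merely on $L^1$ — but this is immediate since $U$ is a positive $L^1$-contraction with $U1 = 1$ and (\ref{5.1}) holds for all bounded measurable $f$.
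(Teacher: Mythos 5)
Your proposal is correct, but it reaches the stationarity identity (\ref{6.010}) by a different route than the paper. The paper proves $\int_0^1 Q(x,B)\,\rho_N(\mathrm{d}x)=\rho_N(B)$ by a direct, self-contained computation: it suffices to take $B=(u,1]$, the tail sum $\sum_{i\geq \lfloor N/(1-u)-x\rfloor+1}P_{N,i}(x)$ is evaluated in closed form as in (\ref{6.011}), and the resulting integral is split at the fractional part of $N/(1-u)$ to yield $\rho_N((u,1])$. You instead observe that $Q(\cdot,B)=U\mathbf{1}_B$ by (\ref{5.2}) and (\ref{6.7}), and that taking $A=[0,1]$ in the defining relation (\ref{5.1}) of the Perron--Frobenius operator of $\left([0,1],\mathcal{B}_{[0,1]},R_N,\rho_N\right)$ gives $\int_0^1 U\mathbf{1}_B\,\mathrm{d}\rho_N=\rho_N(B)$ at once; combined with the uniqueness of $Q^{\infty}$ from uniform ergodicity (Proposition \ref{prop.6.3} and Theorem 3.1.24 of \cite{IG-2009}), which both you and the paper use, this forces $Q^{\infty}=\rho_N$. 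Your argument is shorter and avoids the explicit integration, at the price of leaning on Proposition \ref{prop.5.1}(i) (whose proof already contains the analogous change-of-variables computation in (\ref{5.6})) and on the $R_N$-invariance of $\rho_N$ quoted in Section 2 from \cite{Grochenig&Haas1-1996}; the paper's computation, by contrast, re-verifies this stationarity from scratch inside the RSCC framework. One small slip in your parenthetical alternative: for the Perron--Frobenius operator one has $\int_0^1 U\mathbf{1}_B\,\mathrm{d}\rho_N=\rho_N(B)$ directly, not $\rho_N\left(R_N^{-1}(B)\right)$ (that identity belongs to the Koopman operator $f\mapsto f\circ R_N$); since $\rho_N$ is $R_N$-invariant the two numbers coincide, so the conclusion is unaffected, but the justification as written conflates the two operators.
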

\begin{proof}
For $\rho_N$ in (\ref{2.5}) and $Q$ in (\ref{6.7}), and on account of the uniqueness of $Q^{\infty}$ we have to show that
\begin{equation} \label{6.010}
\int_{0}^{1} Q(x, B) \,\mathrm{d}\rho_N(x) = \rho_N(B), \quad B \in {\mathcal B}_{[0, 1]}.
\end{equation}
Since the intervals $(u, 1] \subset [0, 1]$
generate ${\mathcal B}_{[0, 1]}$, it is sufficient to show the equation (\ref{6.010})
just for $B = (u, 1]$, $0 \leq u < 1$.
Now, $u_{N,i} \in B$ iff $i \geq \left\lfloor\frac{N}{1-u}-x\right\rfloor +1$.
Thus,
\begin{eqnarray} \label{6.011}
Q(x, (u, 1]) &=& \sum_{\left\{i \in \Lambda:
u < u_{N,i}(x) \leq 1 \right\}}P_{N,i}(x) = \sum_{i \geq \left\lfloor\frac{N}{1-u}-x\right\rfloor +1} P_{N,i}(x) \nonumber \\
&=& \frac{x+N-1}{x+\left\lfloor\frac{N}{1-u}-x\right\rfloor +1}.
\end{eqnarray}
Then,
\begin{eqnarray} \label{6.0.12}
&&\int_{0}^{1} Q (x, (u, 1]) \mathrm{d}\rho_N(x)
= \frac{1}{\log \left(\frac{N}{N-1}\right)} \int_{0}^{1} \frac{\mathrm{d}x}{x+\left\lfloor\frac{N}{1-u}-x\right\rfloor} \nonumber \\
&&= \frac{1}{\log \left(\frac{N}{N-1}\right)} \left( \int_{0}^{\frac{N}{1-u}-\lfloor\frac{N}{1-u}\rfloor} \frac{\mathrm{d}x}{x+\lfloor\frac{N}{1-u}\rfloor}  + \int_{\frac{N}{1-u}-\lfloor\frac{N}{1-u}\rfloor}^{1} \ \frac{\mathrm{d}x}{x+\lfloor\frac{N}{1-u}-1\rfloor} \right) \nonumber \\
&&= \frac{1}{\log \left(\frac{N}{N-1}\right)} \log \frac{N}{u+n-1}= \rho_N \left( (u, 1]\right).
\end{eqnarray}
Hence the statement holds.
\end{proof}
Note that the Markov operator $U$ associated with RSCC (\ref{6.1}) is also given by
\begin{equation}\label{6.10}
Uf(x) = \int_{0}^{1} f(y) Q(x, \mathrm{d}y)
\end{equation}
with $Q$ as in (\ref{6.7}), which implies that
\begin{equation}\label{6.11}
U^nf(x) = \int_{0}^{1} f(y) Q^n(x, \mathrm{d}y), \quad x \in [0, 1], \ n \in \mathbb{N}_+.
\end{equation}
The operator $U^nf$ converges uniformly to a constant function $U^{\infty}f$ depending on $f$ for any $f \in L([0, 1])$ namely
\begin{equation}\label{6.12}
U^{\infty} f = \int^{1}_{0}f(y)\,Q^{\infty}(\mathrm{d}y).
\end{equation}
Furthermore, there exist two positive constants $q < 1$ and $k$ such that
\begin{equation}
\|U^n f-U^{\infty} f \|_L \leq k q^n \|f \|_L, \quad n \in \mathbb{N}_+,\, f \in L([0, 1]). \label{6.13}
\end{equation}

Now we are able to show the main result of the paper.
\begin{theorem} [A Gauss-Kuzmin-type theorem for $R_N$]
Fix an integer $N \geq 2$ and let $\left([0, 1],{\mathcal B}_{[0, 1]}, R_N \right)$ be as in Section 2.
For a non-atomic probability measure $\mu$ which has a Riemann-integrable density on $\left([0, 1], {\mathcal B}_{[0, 1]}\right)$,
the following holds:
\begin{equation} \label{6.14}
\lim_{n \rightarrow \infty}\mu (R_N^n < x) = \frac{1}{\log \left(\frac{N}{N-1}\right)}\log \frac{x+N-1}{N-1}, \quad x \in [0, 1].
\end{equation}
\end{theorem}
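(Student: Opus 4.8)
The plan is to translate the question about $\mu(R_N^n < x)$ into a statement about the iterates of the Perron--Frobenius operator $U$ and then invoke the ergodicity of the associated RSCC established in Proposition~\ref{prop.6.3} together with the uniform convergence estimate (\ref{6.13}). First I would apply Proposition~\ref{prop.5.1}(ii): writing $h = \mathrm{d}\mu/\mathrm{d}\lambda$ and $f(x) := \left(\log\left(\frac{N}{N-1}\right)\right)(x+N-1)h(x)$, we have for every $A \in \mathcal{B}_{[0,1]}$
\begin{equation} \nonumber
\mu\left(R_N^{-n}(A)\right) = \int_A U^n f(x)\,\mathrm{d}\rho_N(x).
\end{equation}
Taking $A = [0,x)$ gives $\mu(R_N^n < x) = \int_{[0,x)} U^n f(y)\,\mathrm{d}\rho_N(y)$. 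Since $h$ is Riemann-integrable and $x \mapsto (x+N-1)$ is smooth and bounded away from $0$ on $[0,1]$, the function $f$ is bounded and Riemann-integrable; by a standard approximation argument (sandwich $f$ between Lipschitz functions, as in the classical Gauss--Kuzmin treatment in \cite{IG-2009}) it suffices to prove the convergence for $f \in L([0,1])$.

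Next I would use the RSCC machinery. By Lemma~\ref{lem.6.2} the RSCC in (\ref{6.1}) is with contraction, by Lemma~\ref{lem3.2} it is regular, hence by Proposition~\ref{prop.6.3} it is uniformly ergodic; consequently (\ref{6.11})--(\ref{6.13}) hold, i.e.\ $U^n f$ converges uniformly to the constant $U^\infty f = \int_0^1 f(y)\,Q^\infty(\mathrm{d}y)$, and by Proposition~\ref{prop.6.4} the stationary measure is $Q^\infty = \rho_N$. Therefore $U^\infty f = \int_0^1 f(y)\,\rho_N(\mathrm{d}y)$. But since $\mu$ is a probability measure, $\int_0^1 f\,\mathrm{d}\rho_N = \int_0^1 (x+N-1)h(x)\,\frac{\mathrm{d}x}{x+N-1} = \int_0^1 h(x)\,\mathrm{d}x = 1$, so $U^\infty f \equiv 1$. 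Passing to the limit under the integral (justified by uniform convergence of $U^n f$ on the compact interval),
\begin{equation} \nonumber
\lim_{n\to\infty}\mu(R_N^n < x) = \int_{[0,x)} 1\,\mathrm{d}\rho_N(y) = \rho_N([0,x)) = \frac{1}{\log\left(\frac{N}{N-1}\right)}\int_0^x \frac{\mathrm{d}y}{y+N-1} = \frac{1}{\log\left(\frac{N}{N-1}\right)}\log\frac{x+N-1}{N-1},
\end{equation}
which is exactly (\ref{6.14}).

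The main obstacle, and the only place requiring care, is the reduction from a general Riemann-integrable density $h$ to Lipschitz test functions, since the clean operator-norm estimate (\ref{6.13}) is stated only for $f \in L([0,1])$. The standard device is to approximate the Riemann-integrable $f$ from above and below by Lipschitz (indeed piecewise-linear) functions $f_\varepsilon^- \le f \le f_\varepsilon^+$ with $\int_0^1 (f_\varepsilon^+ - f_\varepsilon^-)\,\mathrm{d}\rho_N < \varepsilon$, using positivity of $U$ to get $\int_{[0,x)} U^n f_\varepsilon^-\,\mathrm{d}\rho_N \le \mu(R_N^n < x) \le \int_{[0,x)} U^n f_\varepsilon^+\,\mathrm{d}\rho_N$, then letting $n \to \infty$ and then $\varepsilon \to 0$; one must also check that a Riemann-integrable $h$ on $[0,1]$ is bounded (it is, by definition of Riemann integrability), so $f$ indeed lies between such Lipschitz bounds. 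Alternatively one can invoke the Gauss--Kuzmin--L\'evy-type transfer result for uniformly ergodic RSCC directly from \cite{IG-2009}. Everything else is routine bookkeeping with the formulas already in Sections~5 and~6.
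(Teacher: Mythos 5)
Your proposal is correct and follows essentially the same route as the paper: represent $\mu(R_N^n<x)$ via Proposition~\ref{prop.5.1}(ii) as $\int_{[0,x)}U^nf\,\mathrm{d}\rho_N$, invoke the uniform ergodicity of the RSCC (Lemma~\ref{lem.6.2}, Lemma~\ref{lem3.2}, Proposition~\ref{prop.6.3}) together with $Q^\infty=\rho_N$ (Proposition~\ref{prop.6.4}) to get $U^nf\to U^\infty f\equiv 1$, and then pass from Lipschitz to Riemann-integrable densities. Your explicit sandwich argument with Lipschitz minorants/majorants is in fact a more careful rendering of the paper's final approximation step (which argues via density of $L([0,1])$ in $C([0,1])$ and then extends to $Q^\infty$-a.s.\ continuous, i.e.\ Riemann-integrable, $f_0$), so nothing essential differs.
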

\begin{proof}
Let $\rho_N$ be as in (\ref{2.5}).
By (\ref{5.3}), we have
\begin{equation} \label{6.15}
\mu \left(R_N^{-n}(A)\right) = \int_{A} U^nf_0(x)\mathrm{d}\rho_N(x) \quad
\mbox{for any }n \in \mathbb{N}_+,\, A \in {\mathcal{B}}_{[0, 1]}
\end{equation}
where $f_0(x)=(x+N-1)(\mathrm{d}\mu / \mathrm{d}\lambda) (x)$ for $x \in [0, 1]$.
If $\mathrm{d}\mu / \mathrm{d}\lambda \in L([0, 1])$, then $f_0 \in L([0, 1])$
and by (\ref{6.12}) and Proposition \ref{prop.6.4} we have
\begin{eqnarray}
U^{\infty} f_0 &=& \int_{0}^{1} f_0(x)\,Q^{\infty}(\mathrm{d}x) = \int_{0}^{1} f_0(x)\,\rho_N(\mathrm{d}x) \nonumber \\
&=& \int_{0}^{1} (\mathrm{d}\mu / \mathrm{d}\lambda) (x)\,\mathrm{d}x = \frac{1}{\log \left( \frac{N}{N-1} \right) }. \label{6.16}
\end{eqnarray}

Taking into account (\ref{6.13}), there exist two constants $q<1$ and $k$ such that
\begin{equation} \label{6.16}
\left\|U^n f_0 - U^{\infty} f_0\right\|_L \leq k q^n\left\|f_0\right\|_L,
\quad n \in \mathbb{N}_+.
\end{equation}
Furthermore, consider the Banach space $C([0, 1])$ of all real-valued continuous functions on $[0, 1]$ with the norm
$\|f \| := \sup_{x \in [0, 1]}|f(x)|$.
Since $L([0, 1])$ is a dense subspace of $C([0, 1])$ we have
\begin{equation} \label{6.17}
\lim_{n \rightarrow \infty} \left\|\left(U^n - U^{\infty}\right)f\right\| = 0 \quad \mbox{for all } f \in C([0, 1]).
\end{equation}
Therefore, (\ref{6.17}) is valid for a measurable function $f_0$ which is $Q^{\infty}$-almost surely continuous, that is, for a Riemann-integrable function $f$.
Thus, we have
\begin{eqnarray*}
\lim_{n \rightarrow \infty} \mu \left(R^n_N < x\right)
&=& \lim_{n \rightarrow \infty} \int_{0}^{x} U^nf_0(u) \rho_N(\mathrm{d}u) \\
&=& \int_{0}^{x} U^{\infty} f_0(u) \rho_N(\mathrm{d}u)
= \frac{1}{\log \left(\frac{N}{N-1}\right)} \int_{0}^{x} \,\rho_N(\mathrm{d}u)\\
&=& \frac{1}{\log \left(\frac{N}{N-1}\right)}\log \frac{x+N-1}{N-1}.
\end{eqnarray*}
Hence the proof is complete.
\end{proof}

%\noindent \textbf{Acknowledgments}

%The authors would like to thank the anonymous referee.

\end{document}